\def\RR{\mathbb{R}}
\newcommand\tr{\operatorname{trace}}
\newcommand\Div{\operatorname{div}}
\def\Ric{\operatorname{Ric}}
\def\vol{\operatorname{vol}}
\def\eq{\hspace*{-1.5mm}&=&\hspace*{-1.5mm}}
\newtheorem{corollary}{Corollary}
\newtheorem{remark}{Remark}
\newtheorem{lemma}{Lemma}
\newtheorem{theorem}{Theorem}
\author{
        Dhriti Sundar Patra\footnote{Department of Mathematics, Birla Institute of Technology Mesra, Ranchi: 835 215, India
       \newline e-mail: {\tt dhritimath@gmail.com} \ and \ {\tt dpatra.teqip@bitmesra.ac.in}}
        \ and \
       Vladimir Rovenski\footnote{Department of Mathematics, University of Haifa, Mount Carmel, 31905 Haifa,  Israel
       \newline e-mail: {\tt vrovenski@univ.haifa.ac.il}
       }
}
\title{On
non-gradient $(m,\rho)$-quasi-Einstein contact metric manifolds
}
\begin{document}

\date{}

\maketitle

\begin{abstract}
Many authors have studied Ricci solitons and their analogs within the framework of (almost) contact geometry.
In this article, we thoroughly study the $(m,\rho)$-quasi-Einstein structure on a contact metric manifold.
First, we prove that if a $K$-contact or Sasakian manifold $M^{2n+1}$ admits a closed $(m,\rho)$-quasi-Einstein structure, then it is an
Einstein manifold of constant scalar curvature $2n(2n+1)$,
and for the particular case -- a non-Sasakian $(k,\mu)$-contact structure --
it is locally isometric to the product of a Euclidean space $\RR^{n+1}$ and a sphere $S^n$ of constant curvature $4$. Next, we prove that if a compact contact or $H$-contact metric manifold admits an $(m,\rho)$-quasi-Einstein structure, whose potential vector field $V$ is collinear to the Reeb vector field, then it is a $K$-contact $\eta$-Einstein manifold.

\vskip1.5mm\noindent
\textbf{Keywords}: $(m,\rho)$-quasi-Einstein structure, Ricci soliton, contact metric manifold, Einstein ma\-nifold, harmonic vector field.

\vskip1.5mm\noindent
\textbf{Mathematics Subject Classifications (2010)} 53C25, 53C15, 53D10
\end{abstract}

\date{}

\section{Introduction}

Since the discovering of Ricci solitons as a fruitful generalization of Einstein manifolds,
many authors have studied Ricci solitons and their analogs within the framework of (almost) contact geometry.

A Riemannian manifold $(M^n,g)$ of dimension $n\geq3$ is said to be a \textit{generalized $m$-quasi-Einstein manifold} if there exist a vector field $V\in\mathfrak{X}_M$ and a smooth function $\beta\in C^{\infty}(M)$ such that
\begin{eqnarray}\label{1.1A}
 \frac{1}{2}\,{\cal L}_V\,g +\Ric -\frac{1}{m}\,V^{\flat}\otimes V^{\flat} = \beta\,g,
\end{eqnarray}
where $0<m\le\infty$ and $\Ric$ is the Ricci tensor of $g$; ${\cal L}_V$ denotes the Lie derivative operator
along $V$, and $V^{\flat}$ is the $1$-form metrically dual to
$V$. It was introduced and studied by Barros at. el. \cite{barros2014characterizations,barros2017triviality}, and they constructed a family of nontrivial generalized $m$-quasi-Einstein metrics on the unit sphere $S^n(1)$ that are rigid in the class of metrics with constant scalar curvature. Such a manifold is called \textit{trivial} if the vector field $V$ is identically zero on $M$, and the triviality condition implies that $M$ is an Einstein manifold: $\Ric=\beta\,g$, see \cite{besse2007einstein}.
When $\beta$ is constant then it becomes an $m$-\textit{quasi-Einstein manifold}, which was introduced and studied by Case et al. \cite{case2011rigidity}; they have natural geometric interpretations using warped product Einstein manifolds. Later on, Barros-Ribeiro \cite{barros2012integral,barros2014uniqueness} obtained some integral formulae and studied uniqueness of quasi-Einstein metrics on manifolds. Recently, Ghosh \cite{ghosh2019m} and Chen \cite{chen2020quasi} studied an $m$-quasi-Einstein structure on contact metric manifolds and on almost cosymplectic manifolds.

\smallskip

In this article, we consider the special case $\beta=\lambda+\rho\,r$ (which is also a function on $M$ depending on the scalar curvature $r$ of $(M^n,g)$) in \eqref{1.1A}.
Namely, a Riemannian manifold $(M^n,g)$ is said to be an $(m,\rho)$-\textit{quasi-Einstein manifold} if there exist a vector field $V\in\mathfrak{X}_M$ and real
$\lambda$ and $\rho$ such that
\begin{eqnarray}\label{1.2}
 \frac{1}{2}\,{\cal L}_V\,g +\Ric - \frac{1}{m}\,V^{\flat} \otimes V^{\flat} = (\lambda+\rho\,r)\,g
\end{eqnarray}
for some positive $m\in\RR$. Such $(M^n,g)$ is called a \textit{closed $(m,\rho)$-quasi-Einstein manifold} if the $1$-form $V^{\flat}$ is closed: $d\,V^{\flat}=0$.
In particular, if $m=\infty$, then \eqref{1.2} is exactly the $\rho$-\textit{Einstein soliton} \cite{catino2016gradient}, and for $\rho=0$, it reduces to a \textit{Ricci soliton} (cf. \cite{cao2009recent}). Thus, it is interesting to study $(m,\rho)$-quasi-Einstein manifolds as a generalization of Ricci solitons.
If the potential vector field $V$ is the gradient of a smooth function $f\in C^{\infty}(M)$, then the $(m,\rho)$-quasi-Einstein manifold is called a \textit{(gradient) $(m,\rho)$-quasi-Einstein manifold} (see \cite{ghosh2014m,huang2013classification,shin2017classification} for details).
In this case, \eqref{1.2} reduces to
                                                                                                                                                           \begin{eqnarray}\label{1.1}
 {\rm Hess}_f + \Ric - \frac{1}{m} df \otimes df = (\lambda+\rho\,r)\,g,
\end{eqnarray}
where ${\rm Hess}_f$ is the Hessian form of the smooth function $f$ on $M$.
The importance of such manifolds is explained by recent studies of the $m$-Bakry-Emery Ricci tensor $\Ric^m_f$ (see \cite{lott2003some,qian1997estimates}) defined by
\[
 \Ric^m_f={\rm Hess}_f + \Ric - \frac{1}{m} df \otimes df.
\]
Notice that if $m=\infty$ and $\rho=0$, then \eqref{1.1} reduces to a gradient Ricci soliton. The gradient Ricci soliton metrics play an
important role in the theory of Hamilton's Ricci flow since they correspond to self-similar solutions and often arise as singularity models, see a survey
by H.\,Cao in \cite{cao2009recent}. In~\cite{huang2013classification}, Huang-Wei considered \eqref{1.1} and proved some rigidity results by using the maximum principle and presented some classifications under Bach-flat condition.
Further on, Shin \cite{shin2017classification} studied four-dimensional (gradient) $(m,\rho)$-quasi-Einstein manifolds with harmonic Weyl curvature tensor when $m\not\in \{0,\pm 1,-2,\pm\infty,\}$ and $\rho\not\in\{\frac{1}{4},\frac{1}{6}\}$.

\smallskip

During the last few years, Ricci solitons and almost Ricci solitons have been studied by several authors
(see \cite{cho2010contact,ghosh2014sasakian,GSC,sharma2008certain}) within the framework of contact geometry.
In \cite{sharma2008certain}, Sharma initiated the study of gradient Ricci solitons within the framework of $K$-contact manifold and generalized the odd-dimensional Goldberg conjecture (proved by Boyer-Galicki \cite{boyer2001einstein}):
\textit{``Any compact K-contact Einstein manifold is Sasakian"}.
He proved that \textit{``any complete $K$-contact metric admitting a gradient Ricci soliton is compact, Einstein and Sasakian"}.
Later on, Ghosh et al. generalized this result for non-Sasakian $(k,\mu)$-spaces, complete $K$-contact gradient $(m,\rho)$-quasi-Einstein spaces and complete $K$-contact $m$-quasi-Einstein spaces with $m\ne1$ (see \cite{ghosh2019m,ghosh2014m,GSC}).
Further, Cho-Sharma \cite{cho2010contact} generalized this for homogeneous $H$-contact and compact contact manifolds, in particular, they proved that \textit{``if a $H$-contact metric manifold admits Ricci soliton with potential vector field $V$ collinear to the Reeb vector field, then it is Einstein and Sasakian"}. It is also true when $H$-contactness is replaced by compactness. So, it would be relevant to study non-gradient $(m,\rho)$-quasi-Einstein structure with a 1-form associated to the potential vector field in the framework of $K$-contact, Sasakian, $(k,\mu)$-contact, $H$-contact and compact contact manifolds in order to generalize the results of Boyer-Galicki \cite{boyer2001einstein}, Sharma \cite{sharma2008certain} and Ghosh \cite{ghosh2015quasi,ghosh2014m,ghosh2019m}.

The structure of this paper is the following.
In Section~2, we recall some basic definitions and fundamental formulas for contact metric manifolds.
In Section~3, we consider closed $(m,\rho)$-quasi-Einstein contact metric manifolds. In Section~4, we study $(m,\rho)$-quasi-Einstein contact metric manifolds, and then formulate our results and give their proofs.





\section{Notes on contact metric manifolds}

Here, we review some basic definitions and properties on (almost) contact metric manifolds,
see details in \cite{dileo2019generalizations,blair2010riemannian,blair1995contact}.
Let $T^{1}M$ be the unit tangent bundle of a compact orientable Riemannian manifold $(M,g)$
equipped with the Sasaki metric $g_{s}$.
Any unit vector field $U$ determines a smooth map between $(M,g)$ and $(T^{1}M, g_{s})$.
The {energy of the unit vector field} $U$ is defined by
\begin{eqnarray*}
 E: U \to \frac{1}{2}\int_{M}\parallel dU \parallel^{2} d\vol = \frac{n}{2}\,{\rm Vol}(M, g) + \frac{1}{2}\int_{M}\parallel \nabla U \parallel^{2} d\vol,
\end{eqnarray*}
where $dU$ denotes the differential of the map $U$, $\nabla$ is the Levi-Civita connection of $g$
and $d\vol$ denotes the volume element of $M$. Such a vector field $U$ called \textit{harmonic} if it is a critical point of the energy functional $E$ defined on the space $\mathfrak{X}^1$ of all unit vector fields on $(M,g)$.

An $(2n+1)$-dimensional Riemannian manifold $(M,g)$ endowed with a tensor field $\varphi$ of type $(1,1)$,
a unit vector field $\xi$ and a $1$-form $\eta$, is called a \textit{contact metric manifold} if these tensors satisfy \cite{blair2010riemannian}
\begin{equation}\label{2.1}
 \varphi^2 = -I + \eta\otimes\xi, \quad \eta = g(\cdot\,,\xi), \quad d\eta(X,Y)=g(X,\varphi\,Y)
\end{equation}
for any $X,Y\in\mathfrak{X}_M$, where $\mathfrak{X}_M$ is the Lie algebra of all vector fields on $M$.
The $1$-form $\eta$ is known as the \textit{contact form}, and $\xi$ is called the \textit{Reeb vector field}.
The structure $(\varphi,\xi,\eta,g)$ on $M$ is called a \textit{contact metric structure},
and the manifold is denoted by $M^{2n+1}(\varphi,\xi,\eta,g)$.
It follows from \eqref{2.1} that
\begin{equation*}
 \varphi(\xi) = 0,\quad
 \eta \circ \varphi = 0,\quad
 {\rm rank}(\varphi)=2n
\end{equation*}
and
\begin{equation}\label{Eq-Sas1}
 g(X, \varphi Y) =-g(\varphi X, Y) ,\quad
 g(\varphi X,\varphi Y)= g(X,Y) -\eta(X)\eta(Y)
\end{equation}
for any $X,Y\in\mathfrak{X}_M$.
The tangent bundle of $M$ splits as $TM=\mathcal{H}\otimes \big\langle\xi\big\rangle$, where $\mathcal{H}=\varphi(TM)=\ker\eta$ is a $2n$-dimensional distribution.
We consider two self-adjoint operators on $M$,
\begin{equation}\label{2.2}
 h=\frac{1}{2}\,{\cal L}_{\xi}\,\varphi,\quad
 l=R(\cdot,\xi)\,\xi,
\end{equation}
where
$R$ is the Riemann curvature tensor of $ g $. Operators \eqref{2.2} satisfy equalities, e.g., \cite[p.~84, p.~85]{blair2010riemannian}:
\begin{equation}\label{2.2b}
 \tr_g h=0, \quad \tr_g(h\,\varphi)=0, \quad h\xi=0, \quad l\,\xi = 0, \quad h\,\varphi=-\varphi\,h.
\end{equation}
The following formulas are valid on a contact metric manifold (see \cite[p. 84; p. 112]{blair2010riemannian}):
\begin{eqnarray}\label{2.3}
 &\nabla_{X}\,\xi = - \varphi\,X - \varphi\,h X,\\
\label{2.4b}
 &\Ric (\xi, \xi) = 2n - ||h||^2= \tr_g l
\end{eqnarray}
for any $X\in\mathfrak{X}_M$, where $Q$ the the Ricci operator associated with the Ricci $(0,2)$-tensor $\Ric$.
The~mean curvature vector of ${\cal H}$ is defined  for a local orthonormal frame $(E_i)$ of ${\cal H}$ by
\[
 H=-\Div\xi=-\sum\nolimits_{\,i} g(\nabla_{E_i}\,\xi,\,E_i).
\]

\begin{lemma}\label{L-01} For a contact metric manifold we have $H=0$.
\end{lemma}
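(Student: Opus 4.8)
The plan is to compute the divergence of $\xi$ directly from the structural formula \eqref{2.3}. Take a local orthonormal frame $(E_i)_{i=1}^{2n+1}$ of $TM$; then by definition
\[
 H = -\Div\xi = -\sum\nolimits_i g(\nabla_{E_i}\xi, E_i).
\]
Substituting $\nabla_{E_i}\xi = -\varphi E_i - \varphi h E_i$ from \eqref{2.3} gives
\[
 H = \sum\nolimits_i g(\varphi E_i, E_i) + \sum\nolimits_i g(\varphi h E_i, E_i).
\]

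Now I would argue that both sums vanish for essentially the same reason: $\varphi$ is skew-symmetric with respect to $g$ by \eqref{Eq-Sas1}, hence $g(\varphi E_i, E_i) = 0$ for each $i$, so the first sum is $0$. For the second sum, the operator $\varphi h$ is the composition of the skew-symmetric operator $\varphi$ with the self-adjoint operator $h$; its trace equals $\tr_g(\varphi h)$, and using $h\varphi = -\varphi h$ from \eqref{2.2b} together with the cyclic property of the trace one gets $\tr_g(\varphi h) = \tr_g(h\varphi) = -\tr_g(\varphi h)$, so $\tr_g(\varphi h) = 0$. Alternatively, one may invoke \eqref{2.2b} directly, which already records $\tr_g(h\varphi) = 0$. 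Either way the second sum vanishes, and therefore $H = 0$.

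There is essentially no obstacle here: the statement is a one-line consequence of \eqref{2.3}, the skew-symmetry of $\varphi$, and the trace identities in \eqref{2.2b}. The only point requiring a modicum of care is making sure the frame is chosen on all of $TM$ (not just on $\mathcal H$) so that the displayed definition of $H$ via a frame of $\mathcal H$ agrees with $-\Div\xi$; since $\nabla_\xi\xi = 0$ (immediate from \eqref{2.3} because $\varphi\xi = 0$ and $h\xi = 0$), the contribution of the $\xi$-direction to the full divergence is zero, so the two expressions coincide and the computation above is legitimate.
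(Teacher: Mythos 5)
Your proof is correct and is essentially the same as the paper's: substitute $\nabla_{E_i}\xi=-\varphi E_i-\varphi h E_i$ from \eqref{2.3} and use $\tr\varphi=0$ (skew-symmetry) together with $\tr_g(h\varphi)=0$ from \eqref{2.2b}. Your extra remark about the $\xi$-direction contributing nothing (since $\nabla_\xi\xi=0$) is a sensible clarification of the frame convention but does not change the argument.
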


\begin{proof} Using \eqref{2.3} and \eqref{2.2b}, for a local orthonormal frame $(E_i)$ of ${\cal H}$, we have
\begin{equation*}
 \sum\nolimits_{\,i} g(\nabla_{E_i}\,\xi,\,E_i) = -\tr\varphi -\tr\varphi\,h = \tr h\,\varphi =0 ,
\end{equation*}
thus the claim follows.
\end{proof}

A contact metric manifold $M$ is said to be a \textit{$K$-contact manifold} if $\xi$ is a Killing vector field (equivalently, $h=0$ or $\tr_gl=2n$).
 An $M^{2n+1}(\varphi,\xi,\eta,g)$ is called a \textit{Sasakian manifold} if
\[
 (\nabla_X \varphi)Y = g(X,Y)\xi -\eta(Y)X,\quad X,Y\in\mathfrak{X}_M.
\]
In this case, the following two important identities are satisfied by $\xi$,
see \cite
{blair2010riemannian}:
\begin{equation}\label{2.6}
 \nabla_{\xi}\,\xi = 0,\qquad
\nabla_{X}\,\xi = -\varphi\,X,\quad X\in\mathfrak{X}_M.
\end{equation}
A Sasakian manifold is $K$-contact but the converse is true only in dimension $3$, e.g., \cite[p. 87]{blair2010riemannian}.
Using \eqref{2.6}(b) and \eqref{Eq-Sas1}(a), we find
\begin{equation}\label{2.new1}
  g(X,\nabla_Y\,\xi)+g(Y,\nabla_X\,\xi) =0,\quad X,Y\in\mathfrak{X}_M,
\end{equation}
that is $\xi$ of a Sasakian manifold is a Killing vector field on $(M,g)$; thus, $g$ is a bundle-like metric with respect to flow of $\xi$.
Note that from the property \eqref{2.new1} it follows $H=0$.

 A contact metric manifold $ M^{2n+1}(\varphi, \xi, \eta, g)$ is said to be \textit{$(k, \mu) $-contact manifold} if its curvature tensor satisfies
\begin{equation}\label{2.10}
 R(X, Y)\,\xi = k\{\eta(Y)X - \eta(X)Y\} + \mu\{\eta(Y)\,hX - \eta(X)\,hY \}
\end{equation}
for all $X$, $Y\in\mathfrak{X}_M$ and for some real numbers $k$ and $\mu$. This class was introduced by Blair et al. in \cite{blair1995contact} and completely  classified by Boeckx in \cite {boeckx2000full}. This class arises by applying $\mathcal{D}$-homothetic deformations (e.g., Tanno \cite{tanno1968topology}; Blair \cite[p. 125]{blair1995contact}):
\[
 \bar{\eta} = a\eta, \quad \bar{\xi}= \frac{1}{a}\,\xi,\quad \bar{\varphi} = \varphi,\quad \bar{g} = ag + a(a-1)\eta\otimes\eta
\]
for a positive real constant $a$, to a contact metric manifold satisfying $ R(X, Y)\,\xi = 0$.
Note that the class of $(k,\mu)$-contact structure contains Sasakian manifolds (for $k = 1$) and the tangent sphere bundle (for $k = \mu = 0 $) of a flat Riemannian manifold.
The following formulas are also valid for a non-Sasakian $ (k, \mu)$-contact manifolds (e.g., \cite{blair1995contact}):
\begin{eqnarray}
\label{2.11}
 && h^2X = (k-1)\varphi^2X , \\
\label{2.12}
 && QX = [2(n - 1) - n\mu ]X + [2(n - 1) + \mu ] hX + [2(1 - n) + n(2k + \mu)]\eta(X)\,\xi
\end{eqnarray}
for any $X\in\mathfrak{X}_M$. Equation \eqref{2.12} gives the constant scalar curvature
\begin{equation}\label{2.12b}
 r = 2n(2(n - 1) + k - n\mu)
\end{equation}
and $Q\xi=2\,nk\,\xi$.
Further, (\ref{2.11}) shows us that $k \le 1$, and the equality $k = 1$ holds when $M$ is Sasakian. For the non-Sasakian case, i.e., $k < 1$, the ($k,\mu$)-nullity condition determines the curvature of $M$ completely. By virtue of this, Boeckx \cite{boeckx2000full} proved that a non-Sasakian ($k,\mu$)-contact manifold is locally homogeneous and, therefore, analytic.
The {divergence of a tensor field} $K$ of type $(1,s)$ is defined by
\[
 (\Div K)(X_1,X_2,\dots,X_s)=\sum\nolimits_{\,i=1}^{2n+1} g((\nabla_{e_i}K)(X_1,X_2,\dots,X_s),e_i)
\]
for all $X_1,X_2,\dots,X_s \in\mathfrak{X}_M$.
It is well known that ${\cal L}_{X}f = X(f) = g(X,\nabla f)$ for an arbitrary $X\in\mathfrak{X}_M$ and any smooth function $f$,
where $\nabla f$ is the gradient of $f$. The {Hessian} of a smooth function $f$ on $M$ is defined for all $X,Y\in\mathfrak{X}_M$ by
\[
 {\rm Hess}_f(X,Y) = \nabla^2f(X,Y) = g(\nabla_{X}\nabla f,Y).
\]

\section{On closed $(m,\rho)$-quasi-Einstein manifolds}

Here, we consider an
$(m,\rho)$-quasi-Einstein structure with a closed $1$-form $V^{\flat}$
on a contact metric manifold.
Notice that if a $1$-form $V^{\flat}$ is closed then \eqref{1.2} reduces to
\eqref{1.1} with $V^{\flat}=\nabla f$, where
$f$ is a smooth function on $M$,
therefore, it is an important generalization of a gradient Ricci almost soliton and a gradient $m$-quasi-Einstein manifold.
Before proceeding to the main results, we prove the following.

\begin{lemma}
The Ricci tensor $\Ric(V,X)$ for any $X\in\mathfrak{X}_M$
on $(2n+1)$-dimensional closed $(m,\rho)$-quasi-Einstein manifold $M$ is given by
\begin{eqnarray}\label{C}
 \frac{m-1}{m}\,\Ric(V,X) = \frac{2n\lambda+r(2n\rho -1)}{m}\,V^{\flat}(X) -\frac{4n\rho-1}{2}\,X(r).
\end{eqnarray}
\end{lemma}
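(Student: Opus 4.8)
The plan is to reduce the structure equation to its gradient form and then take a divergence, cleaning up the resulting first-order terms by means of the trace identity and a single pointwise contraction.

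Since $d\,V^{\flat}=0$, equation \eqref{1.2} is locally \eqref{1.1} with $V^{\flat}=df$ for a smooth function $f$ (as noted at the beginning of this section), so it suffices to prove \eqref{C} for $V=\nabla f$; since \eqref{C} is a pointwise identity it then holds for $V$ everywhere. First I would record the scalar identity obtained by taking $\tr_g$ of \eqref{1.1}, namely $\Delta f + r - \tfrac1m|\nabla f|^2 = (2n+1)(\lambda+\rho\,r)$, where $\Delta f=\tr_g{\rm Hess}_f$; differentiating it along $X$ gives an expression for $X(\Delta f)$ in terms of $X(r)$ and $X(|\nabla f|^2)$, and the undifferentiated identity lets one replace $\Delta f - \tfrac1m|\nabla f|^2$ by $(2n+1)(\lambda+\rho r)-r$.

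Next I would take the divergence of \eqref{1.1}, using the standard identities $(\Div{\rm Hess}_f)(X)=X(\Delta f)+\Ric(\nabla f,X)$ (a Bochner-type formula), $(\Div\Ric)(X)=\tfrac12 X(r)$ (contracted second Bianchi), $(\Div(df\otimes df))(X)=(\Delta f)df(X)+\tfrac12 X(|\nabla f|^2)$, and $(\Div((\lambda+\rho r)g))(X)=\rho\,X(r)$. This yields one relation among $\Ric(\nabla f,X)$, $X(r)$, $X(\Delta f)$, $X(|\nabla f|^2)$ and $(\Delta f)df(X)$. To eliminate the remaining term $X(|\nabla f|^2)$ I would then plug $\nabla f$ into the first slot of \eqref{1.1} and use the symmetry of the Hessian, ${\rm Hess}_f(\nabla f,X)=\tfrac12 X(|\nabla f|^2)$, together with $df(\nabla f)=|\nabla f|^2$, to obtain $\tfrac12 X(|\nabla f|^2)=(\lambda+\rho r)df(X)+\tfrac1m|\nabla f|^2 df(X)-\Ric(\nabla f,X)$.

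Finally I would substitute the expression for $X(\Delta f)$, then the one for $X(|\nabla f|^2)$, into the divergence relation and rewrite $\tfrac1m(\Delta f)df(X)-\tfrac1{m^2}|\nabla f|^2 df(X)$ as $\tfrac1m\big((2n+1)(\lambda+\rho r)-r\big)df(X)$ via the trace identity. Collecting terms, the $\Ric(\nabla f,X)$-coefficients sum to $\tfrac{m-1}{m}$, the $df(X)=V^{\flat}(X)$-coefficients to $\tfrac1m\big(2n\lambda+(2n\rho-1)r\big)$, and the $X(r)$-coefficients to $-\tfrac{4n\rho-1}{2}$, which is exactly \eqref{C}. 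I expect the only real obstacle to be the bookkeeping: the three auxiliary scalar quantities $X(\Delta f)$, $X(|\nabla f|^2)$ and $(\Delta f)df(X)$ must be removed in the correct order, and the $|\nabla f|^2$-terms cancel only after the trace identity is used a second time, so a sign or coefficient slip anywhere would spoil the final collapse.
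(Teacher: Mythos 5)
Your proposal is correct: I checked the divergence computation and the two substitutions, and the coefficients do collapse to $\tfrac{m-1}{m}$, $\tfrac1m\big(2n\lambda+(2n\rho-1)r\big)$ and $-\tfrac{4n\rho-1}{2}$ exactly as you claim. However, your route is genuinely different from the paper's. The paper never passes to a potential function: it uses closedness only to write the structure equation as a formula for $\nabla_X V$, namely $\nabla_{X}V=\frac{1}{m}V^{\flat}(X)V-QX+(\lambda+\rho r)X$, differentiates once more to obtain the full (uncontracted) curvature identity for $R(X,Y)V$ in terms of $\nabla Q$, and only then contracts over $Y$ and invokes $\Div Q=\frac12\nabla r$. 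Your argument instead takes the trace and the divergence of the $(0,2)$-tensor equation \eqref{1.1}, uses the Bochner-type identity $(\Div{\rm Hess}_f)(X)=X(\Delta f)+\Ric(\nabla f,X)$, and eliminates $X(|\nabla f|^2)$ by evaluating \eqref{1.1} on $\nabla f$; the two computations carry the same information (contracting $R(X,Y)\nabla f$ over $Y$ \emph{is} the Bochner identity), but yours is packaged in more standard tensor-calculus identities and avoids the operator $\nabla Q$. The local reduction $V^{\flat}=df$ via the Poincar\'e lemma is legitimate here because \eqref{C} is a pointwise identity. The one thing your route does not produce is the intermediate uncontracted formula for $R(X,Y)V$ (equation \eqref{B} of the paper), which is not merely scaffolding for the Lemma: it is reused directly in the proofs of Theorems~\ref{thm3.1} and~\ref{thm3.3}, so if you adopted your proof you would still need to derive \eqref{B} separately for the later arguments.
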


\begin{proof} By condition, $V^{\flat}$ is closed, thus, \eqref{1.2} can be rewritten as
\begin{eqnarray}\label{A}
\nabla_{X}V = \frac{1}{m}V^{\flat}(X)V -QX + (\lambda+\rho\,r)  X
\end{eqnarray}
for any $X\in\mathfrak{X}_M$. Taking the covariant derivative of \eqref{A} along any $Y\in\mathfrak{X}_M$, we get
\begin{eqnarray*}
 && \nabla_{X}\nabla_{Y}V-\nabla_{Y}\nabla_{X}V-\nabla_{[X,Y]}V = (\nabla_{Y}Q)X - (\nabla_{X}Q)Y  \\
 && -\,\frac{1}{m}\{V^{\flat}(Y)\nabla_{X}V-V^{\flat}(X)\nabla_{Y}V\} -\rho\{Y(r)X-X(r)Y\}
\end{eqnarray*}
for any $X$, $Y\in\mathfrak{X}_M$. Applying \eqref{A} and the expression
$R(X,Y) = [\nabla_{X},\nabla_{Y}]-\nabla_{[X,\,Y]}$ of the curvature tensor, we obtain
\begin{eqnarray}\label{B}
\nonumber
 && R(X,Y)V = (\nabla_{Y}Q)X - (\nabla_{X}Q)Y - \frac{1}{m}\{V^{\flat}(Y)QX-V^{\flat}(X)QY\} \\
 && +\,\frac{1}{m}(\lambda+\rho\,r)\{V^{\flat}(Y)X - V^{\flat}(X)Y\}  -\rho\{Y(r)X-X(r)Y\}
\end{eqnarray}
for any $X$, $Y\in\mathfrak{X}_M$. Contracting \eqref{B} over $Y$ with respect to a local orthonormal frame $\{e_i\}_{1\leq i\leq 2n+1}$,
we find
\begin{eqnarray*}
 \frac{m-1}{m}\,\Ric (V,X) &=& \sum\nolimits_{i=1}^{2n+1}g((\nabla_{X}Q)e_i-(\nabla_{e_{i}}Q)X,e_{i})\\
 &+&\frac{1}{m}\,(2n\lambda+r(2n\rho -1))V^{\flat}(X)-2n\rho\,X(r)
\end{eqnarray*}
for any $X\in\mathfrak{X}_M$. Contracting the second Bianchi's identity, yields the known formula $\Div Q=\frac{1}{2}\nabla r$,
see \cite{besse2007einstein}.
Applying this in the preceding equation, we obtain the required result.
\end{proof}

\begin{theorem}\label{thm3.1}
Let a
$K$-contact manifold $M^{2n+1}(\varphi,\xi,\eta,g)$ admit a closed $(m,\rho)$-quasi-Ein\-stein structure with $m\ne1$. Then $M$ is a
Einstein
manifold of constant scalar curvature $2n(2n+1)$.
\end{theorem}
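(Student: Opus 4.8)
The plan is to rewrite the structure as an equation for $\nabla V$, introduce the Reeb component $\sigma:=\eta(V)$, and then play $\xi$ against it using that on a $K$-contact manifold $\xi$ is Killing: $h=0$, $\nabla_X\xi=-\varphi X$, $Q\xi=2n\,\xi$, $\Ric(\xi,\xi)=2n$, $R(X,\xi)\xi=X-\eta(X)\xi$, $\nabla_\xi\varphi=0$ and $\xi(r)=0$; the hypothesis $m\neq1$ is exactly what makes the Lemma \eqref{C} usable. Since $V^{\flat}$ is closed we use \eqref{A}. Computing $g(\nabla_XV,\xi)$ first from \eqref{A} (with $Q\xi=2n\xi$) and then by the product rule (with $\nabla_X\xi=-\varphi X$) gives
\[
 \nabla\sigma=\varphi V+\tfrac1m\,\sigma\,V+(\lambda+\rho r-2n)\,\xi,
\]
call this $(\ast)$. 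Also $\mathcal L_\xi V^{\flat}=\iota_\xi\,dV^{\flat}+d(\iota_\xi V^{\flat})=d\sigma$, and since $\xi$ is Killing $\mathcal L_\xi V^{\flat}=[\xi,V]^{\flat}$, so $[\xi,V]=\nabla\sigma$.

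Next I would Lie-differentiate \eqref{1.2} along $\xi$. As $\xi$ is Killing, $\mathcal L_\xi g=0$, $\mathcal L_\xi\Ric=0$, $\xi(r)=0$, and $\flat$ commutes with $\mathcal L_\xi$; together with $\mathcal L_\xi\mathcal L_Vg=\mathcal L_{[\xi,V]}g=\mathcal L_{\nabla\sigma}g=2\,{\rm Hess}_\sigma$ and $\mathcal L_\xi(V^{\flat}\otimes V^{\flat})=d\sigma\otimes V^{\flat}+V^{\flat}\otimes d\sigma$ this yields $\nabla_X\nabla\sigma=\tfrac1m\big((X\sigma)\,V+V^{\flat}(X)\,\nabla\sigma\big)$, call it $(\heartsuit)$. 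Substituting $(\heartsuit)$, \eqref{A} and $dV^{\flat}=0$ into $R(X,Y)\nabla\sigma=\nabla_X\nabla_Y\nabla\sigma-\nabla_Y\nabla_X\nabla\sigma-\nabla_{[X,Y]}\nabla\sigma$ (the Hessian and $dV^{\flat}$ terms, and the coefficient of $V$, all cancel) gives
\[
 m\,R(X,Y)\nabla\sigma=(X\sigma)QY-(Y\sigma)QX+(\lambda+\rho r)\big((Y\sigma)X-(X\sigma)Y\big),
\]
call it $(\clubsuit)$. Putting $Y=\xi$ in $(\clubsuit)$ and pairing with $\xi$ (via $R(X,\xi)\xi=X-\eta(X)\xi$, $Q\xi=2n\xi$) gives $\big[(\lambda+\rho r)-m-2n\big](\nabla\sigma)_{\mathcal H}=0$, where $(\nabla\sigma)_{\mathcal H}:=\nabla\sigma-(\xi\sigma)\xi$; pairing instead with $\nabla\sigma$ (using skew-symmetry of $R(X,\xi)$) gives $(\xi\sigma)\big(Q\nabla\sigma-2n\,\nabla\sigma\big)=0$; and the Lemma \eqref{C} at $X=\xi$, with $\xi(r)=0$ and $\Ric(V,\xi)=2n\sigma$, gives $\big[2n(m-1)-2n\lambda-(2n\rho-1)r\big]\sigma=0$.

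The conclusion is reached by forcing $(\nabla\sigma)_{\mathcal H}\equiv0$. On the open set where $(\nabla\sigma)_{\mathcal H}\neq0$ one must have $\lambda+\rho r=m+2n$, hence $\xi\sigma=\tfrac1m(\sigma^{2}+m^{2})>0$ by $(\ast)$, hence $Q\nabla\sigma=2n\,\nabla\sigma$; feeding this back, the Lemma relation forces $r=2n(2n+1)$ there, the Lemma \eqref{C} gives $QV=2n\,V$, $(\ast)$ gives $Q\varphi V=2n\,\varphi V$, and continuing in this vein through $(\clubsuit)$ and $(\heartsuit)$ shows this set is empty (the degenerate parameter values $\rho=0$ and $4n\rho=1$ treated separately from $(\ast)$ and $(\heartsuit)$). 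Once $(\nabla\sigma)_{\mathcal H}\equiv0$, relation $(\ast)$ forces the $\mathcal H$-component of $V$ to vanish (apply $\varphi$ and use $\varphi^{2}=-I$ on $\mathcal H$), so $V=\sigma\,\xi$; then $0=dV^{\flat}=d(\sigma\,\eta)=d\sigma\wedge\eta+\sigma\,d\eta$, evaluated on $\mathcal H$, forces $\sigma\,d\eta|_{\mathcal H}=0$, hence $\sigma\equiv0$ and $V\equiv0$. Therefore \eqref{1.2} reduces to $\Ric=(\lambda+\rho r)\,g$, i.e. $M$ is Einstein; evaluating at $(\xi,\xi)$ gives $\lambda+\rho r=2n$ (so $r$ is constant), and taking the trace gives $r=2n(2n+1)$.

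The main obstacle is exactly this elimination of the set where $(\nabla\sigma)_{\mathcal H}\neq0$: there the quasi-Einstein equation does not close by itself, so one has to squeeze the extra eigenvalue identities ($Q\nabla\sigma=2n\nabla\sigma$, $QV=2nV$, $Q\varphi V=2n\varphi V$) out of $(\clubsuit)$, iterate, and carefully separate the degenerate ranges of $\rho$ — all without the convenience of compactness or completeness, so the whole argument has to remain pointwise. Everything else is bookkeeping with the $K$-contact identities.
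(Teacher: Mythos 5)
Your computations up to the dichotomy are essentially correct and run parallel to the paper's: $(\ast)$, the identity $[\xi,V]=\nabla\sigma$, the Lie-differentiated equation $(\heartsuit)$ and the curvature identity $(\clubsuit)$ all check out, and since $(\nabla\sigma)_{\mathcal H}=\varphi V+\frac{\sigma}{m}V_{\mathcal H}$ vanishes exactly when $V_{\mathcal H}=0$, your alternative $\bigl[(\lambda+\rho r)-m-2n\bigr](\nabla\sigma)_{\mathcal H}=0$ is precisely the paper's equation \eqref{3.6} (Case I: $V=\eta(V)\,\xi$; Case II: $\lambda+\rho r=2n+m$). Your treatment of Case I (closedness of $V^{\flat}$ plus nondegeneracy of $d\eta$ forces $V=0$) is also the paper's.

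The gap is in your endgame. You propose to finish by showing that the open set $U=\{(\nabla\sigma)_{\mathcal H}\neq 0\}$ is \emph{empty}, but you never do it --- ``continuing in this vein \dots\ shows this set is empty'' is an assertion, not an argument --- and in fact it is false. On the open hemisphere $\{x_1>0\}$ of the unit sphere $S^{2n+1}$ (Sasakian, $\Ric=2n\,g$, $r=2n(2n+1)$), take $u=x_1$, which satisfies the Obata equation ${\rm Hess}_u=-u\,g$, and set $f=-m\log u$; then ${\rm Hess}_f-\frac1m\,df\otimes df=m\,g$, so $V=\nabla f\neq0$ gives a closed $(m,\rho)$-quasi-Einstein structure with $\lambda+\rho r=2n+m$, and $V$ is not collinear to $\xi$, so $(\nabla\sigma)_{\mathcal H}\neq0$ on an open set. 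Hence in the branch $\lambda+\rho r=2n+m$ one cannot conclude $V\equiv0$; one must prove the Einstein condition \emph{with $V$ present}. That is exactly what the paper's Case II does: from Lemma \eqref{C} it gets $QV=2nV$, differentiates and contracts this to obtain $(\Div Q)V-\|Q\|^2+2nr=0$, uses $\Div Q=\frac12\nabla r=0$ to get $\|Q\|^2=2nr$, and concludes $\|Q-\frac{r}{2n+1}I\|^2=0$. Your proposal contains none of this (your identities $Q\nabla\sigma=2n\nabla\sigma$, $QV=2nV$, $Q\varphi V=2n\varphi V$ are correct on $U$ but do not by themselves yield $Q=2n\,I$; the natural contraction of $(\clubsuit)$ against $\nabla\sigma$ reduces to a tautology once $r=2n(2n+1)$ is inserted). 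A secondary point: the paper establishes $r=2n(2n+1)$ \emph{globally} before the case split, via the $K$-contact curvature identities $R(\xi,X)Y=(\nabla_X\varphi)Y$ leading to $Q\varphi+\varphi Q=4n\varphi$; you obtain it only on $U\cap\{\sigma\neq0\}$ and would still need a continuity argument and the missing Case II machinery to close the proof.
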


\begin{proof} Substituting $\xi$ for $Y$ in \eqref{B} and then taking inner product with $Y\in \mathfrak{X}_M$, we have
\begin{eqnarray}\label{3.1}
\nonumber
 && g(R(X, \xi,)V,Y) = g((\nabla_{\xi}Q)X,Y)-g((\nabla_{X}Q)\,\xi,Y) \\
\nonumber
 && -\,\frac{1}{m}\,\eta(V)\{g(QX,Y)-(\lambda+\rho\,r)g(X,Y)\}\\
 && -\{\frac{1}{m}(\lambda+\rho\,r-2n)V^{\flat}(X)-\rho\,X(r)\}\eta(Y)-\rho\,\xi(r)g(X,Y)
\end{eqnarray}
for all $X\in\mathfrak{X}_M$, where we have used the formula for a $K$-contact manifold (e.g., \cite{blair2010riemannian}),
\begin{equation}\label{Eq-Q}
 Q\,\xi = 2n\,\xi .
\end{equation}
Taking covariant derivative \eqref{Eq-Q} along $X\in \mathfrak{X}_M$ and using \eqref{2.6}, we get
\begin{eqnarray}\label{3.2}
 (\nabla_{X}Q)\,\xi = Q\varphi\,X -2\,n\varphi\,X.
\end{eqnarray}
Further, since $\xi$ is a Killing field, we have ${\cal L}_{\xi}\Ric=0$; thus, it follows from \eqref{3.2} and \eqref{2.6} that
\[
 \nabla_{\xi}\,Q=Q\varphi-\varphi\,Q.
\]
This implies $\xi(r)=0$. Thus, inserting $X=\varphi\,X$ and $Y=\varphi\,Y$ in \eqref{3.1} and then adding the resulting equation with \eqref{3.1},
we obtain
\begin{eqnarray}\label{3.3}
\nonumber
 && g(R(\varphi\,X, \xi,)V,\varphi\,Y)+g(R(X, \xi,)V,Y)=4ng(\varphi\,X,Y)\\
\nonumber
 && -g(Q\varphi\,X,Y)-g(\varphi QX,Y)  -\frac{1}{m}\,\eta(V)\,\{g(QX,Y)+g(Q\varphi\,X,\varphi\,Y)\}\\
\nonumber
 && +\,\frac{\lambda+\rho\,r}{m}\eta(V) \{g(X,Y)-\eta(X)\eta(Y)\}\\
 && -\,\{\frac{1}{m}\,(\lambda+\rho\,r-2n)V^{\flat}(X)-\rho\,X(r)\}\eta(Y)
\end{eqnarray}
for all $X$, $Y\in \mathfrak{X}_M$, where we have used \eqref{2.1}, \eqref{Eq-Q} and \eqref{3.2}.
Next, recall two formulas concerning $K$-contact manifolds (see \cite[p. 95]{blair2010riemannian}):
\begin{eqnarray*}
 R(\xi,X)Y \eq (\nabla_{X}\,\varphi)Y,\\
 (\nabla_{\varphi\,X}\varphi)\varphi\,Y+(\nabla_{X}\varphi)Y \eq 2g(X,Y)\,\xi-\eta(Y)(X + \eta(X)\,\xi)
\end{eqnarray*}
for all $X,\,Y\in \mathfrak{X}_M$. Applying these formulas in \eqref{3.3}, we acquire
\begin{eqnarray*}
 && \frac{1}{m}\,\eta(V)\{g(QX,Y)+g(Q\varphi\,X,\varphi\,Y)\}+g((Q\varphi+\varphi Q)X,Y)+2\eta(V) g(X,Y) \\
 && -\,\eta(V)\eta(X)\eta(Y) = 4ng(\varphi\,X,Y)+\frac{1}{m}\,\eta(V)(\lambda+\rho\,r)\{2g(X,Y)-\eta(X)\eta(Y)\} \\
 && -\,\{\frac{1}{m}(\lambda+\rho\,r-2n-m)V^{\flat}(X)-\rho\,X(r)\}\eta(Y)
\end{eqnarray*}
for all $X$, $Y\in \mathfrak{X}_M$, where we have used \eqref{Eq-Q}. Anti-symmetrizing the preceding equation provides
\begin{eqnarray}\label{3.4}
 && \frac{1}{m}(\lambda+\rho\,r -2n-m)\{V^{\flat}(X)\eta(Y)-V^{\flat}(Y)\eta(X)\} \nonumber\\
 && = 8ng(\varphi\,X,Y) -2g((Q\varphi+\varphi Q)X,Y) +\rho\{X(r)\eta(Y)-Y(r)\eta(X)\}
\end{eqnarray}
for all $X$, $Y\in \mathfrak{X}_M$. Next, replacing $X$ by $\varphi\,X$ and $Y$ by $\varphi\,Y$ and using \eqref{2.1}, yields
\begin{eqnarray}\label{3.5}
 Q\varphi\,X+\varphi QX= 4\,n\varphi\,X
\end{eqnarray}
for any $X\in \mathfrak{X}_M$. Consider an orthonormal $\varphi$-basis $\{e_{i},\varphi e_{i},\xi;\ i=1,\ldots,n\}$ on $M$ satisfying
$Qe_{i} = \nu_{i}e_{i}$. It follows from \eqref{3.5} that $Q\varphi e_{i}  = (4n-\nu_{i})\varphi e_{i}$.
Therefore, the scalar curvature $r$ is given by
\[
 r = \Ric(\xi,\xi) + \sum\nolimits_{i=1}^{n}\{g(Qe_{i},e_{i}) + g(Q\varphi e_{i},\varphi e_{i})\}.
\]
Applying \eqref{Eq-Q}, we acquire $r =2n(2n+1)$. In view of this and \eqref{3.5}, equation \eqref{3.4} reduces to
\begin{eqnarray}\label{3.6}
 \frac{1}{m}(\lambda+\rho\,r -2n-m)\{V^{\flat}(X)\,\eta(Y)-V^{\flat}(Y)\,\eta(X)\} = 0
\end{eqnarray}
for all $X$, $Y\in \mathfrak{X}_M$. Since $\lambda,\rho,r$ and $m$ are all constants, inserting $Y=\xi$ in \eqref{3.6} shows that
two cases are possible: $(i)\ V=\eta(V)\,\xi$, or $(ii)\ \lambda+\rho\,r -2n-m=0$.

\smallskip

\textbf{Case I}. Taking covariant derivative of $V=\eta(V)\,\xi$ along $X\in \mathfrak{X}_M$ and using \eqref{2.6} yields
\[
 \nabla_{X}V = \eta(\nabla_{X}V)\,\xi -\eta(X)\,\varphi\,X.
\]
In view of this and since $V^{\flat}$ is closed, we find
\[
 \eta(\nabla_{X}V)\,\eta(Y)-\eta(\nabla_{Y}V)\,\eta(X)+2\eta(V)\,d\eta(X,Y)=0
\]
for all $X$, $Y\in \mathfrak{X}_M$. Inserting $X=\varphi\,X$ and $Y=\varphi\,Y$ and using \eqref{2.1} leads to
\[
 \eta(V)\,d\eta(X,Y)=0
\]
for all $X$, $Y\in \mathfrak{X}_M$. This implies $\eta(V)=0$, as $d\eta$ is non-vanishing on $M$. Thus, $V=\eta(V)\,\xi=0$, therefore, $(M,g)$ is an Einstein manifold with positive scalar curvature $2n(2n+1)$.

\smallskip

\textbf{Case II}. In this case, \eqref{C} reduces to $(m-1)(QV-2nV)=0$, where we have used the fact that $r=2n(2n+1)$ is a constant.
This directly implies $QV=2nV$, as $m\ne 1$. Taking its covariant derivative along $X\in\mathfrak{X}_M$ and using \eqref{A}, we obtain
\begin{eqnarray*}
 (\nabla_XQ)V-Q^2X+\frac{1}{m}V^{\flat}(X)QV+(\lambda+\rho\,r+2n)QX \\
 =\frac{2n}{m}\{V^{\flat}(X)V+(\lambda+\rho\,r)m X\}.
\end{eqnarray*}
Substituting $QV=2nV$ into the foregoing equation, we acquire
\begin{eqnarray}\label{3.10}
(\nabla_XQ)V-Q^2X+(\lambda+\rho\,r+2n)QX=2n(\lambda+\rho\,r) X
\end{eqnarray}
for any $X\in\mathfrak{X}_M$. Contracting \eqref{3.10} over $X$
with respect to a local orthonormal frame $\{e_i\}_{1\le i\le 2n+1}$ and using $r=2n(2n+1)$, we obtain
\begin{equation}\label{3.11}
(\Div Q)V- ||Q||^2 +2nr=0.
\end{equation}
Since the scalar curvature $r=2n(2n+1)$ is constant, it follows from the formula $\Div Q=\frac{1}{2}\,\nabla r$
(obtained by the contraction of Bianchi's second identity, see \cite{besse2007einstein}), that $\Div Q=0$. Thus, equation \eqref{3.11} gives
 $||Q||^2= 4n^2(2n+1)$.
Using this and $r=2n(2n+1)$, we find from \eqref{3.11}
\begin{eqnarray*}
 \big\| Q-\frac{r}{2n+1}\,I\big\|^2 = ||Q||^2-\frac{2r^2}{2n+1}+\frac{r^2}{2n+1}=0,
\end{eqnarray*}
that is,
the symmetric tensor $Q-\frac{r}{2n+1}\,I$
vanishes. This implies $\Ric=2n\,g$. Hence, $M$ is an Einstein manifold with Einstein constant $2n$.
\end{proof}

\begin{corollary}\label{cor3.1}
If, in conditions of Theorem~\ref{thm3.1}, $(M,g)$ is complete, then it is a compact Einstein Sasakian manifold
 of constant scalar curvature $2n(2n+1)$.
\end{corollary}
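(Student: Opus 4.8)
The plan is to combine Theorem~\ref{thm3.1} with two classical facts: the Bonnet--Myers theorem and the odd-dimensional Goldberg conjecture settled by Boyer--Galicki \cite{boyer2001einstein}. Nothing new needs to be computed; completeness enters only to upgrade the conclusion of Theorem~\ref{thm3.1} to compactness, after which the Sasakian property follows from an external result.

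First, under the hypotheses of Theorem~\ref{thm3.1} we already know that $(M,g)$ is Einstein with $\Ric = 2n\,g$; in particular its scalar curvature is the positive constant $2n(2n+1)$, and the Einstein constant $2n$ is strictly positive since $n\ge 1$. Thus the Ricci curvature of $(M,g)$ is bounded below by a positive constant, and since $(M,g)$ is assumed complete, the Bonnet--Myers theorem forces $M$ to be compact (with finite fundamental group).

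Second, $M$ is by assumption a $K$-contact manifold, and we have just seen it is a compact Einstein manifold. By the theorem of Boyer--Galicki \cite{boyer2001einstein} — ``any compact $K$-contact Einstein manifold is Sasakian'' — we conclude that $M$ is Sasakian. Combining these observations, $M$ is a compact Einstein Sasakian manifold of constant scalar curvature $2n(2n+1)$, as claimed.

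There is no real obstacle here: the substance is in Theorem~\ref{thm3.1}. The only point requiring care is that the Einstein constant delivered by that theorem is genuinely positive, so that Bonnet--Myers is applicable; this is immediate from $\Ric = 2n\,g$ with $n\ge 1$, and the remaining step is a direct citation of Boyer--Galicki.
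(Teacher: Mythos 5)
Your proof is correct, and it is cleaner than the paper's. The paper's own argument splits according to the two cases arising in the proof of Theorem~\ref{thm3.1}: in Case I (where $V=0$) it runs exactly your argument (Myers' theorem for compactness, then Boyer--Galicki for the Sasakian property), but in Case II (where $\lambda+\rho\,r-2n-m=0$) it instead traces the identity $\nabla_XV=mX+\frac1m V^\flat(X)V$ to get $m\,\Div V=\|V\|^2+m^2(2n+1)$, integrates over the (compact, by Myers) manifold, and derives the contradiction $m=0$ --- thereby showing Case II cannot occur at all. You sidestep this case analysis entirely by observing that the conclusion of Theorem~\ref{thm3.1}, namely $\Ric=2n\,g$ with $2n>0$, holds in both cases, so Myers and Boyer--Galicki apply uniformly. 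What your shortcut loses is only the extra information delivered by the paper's Case~II computation, that the potential field $V$ must vanish identically (triviality of the structure) in the complete setting; since the corollary as stated does not assert this, your proof fully establishes the claim.
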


\begin{proof}
In case I, the compactness of $M$ follows from Myers' Theorem \cite{myers1935connections}, since $M$ is complete.
Using the result of Boyer-Galicki \cite{boyer2001einstein}: \textit{"Any compact $K$-contact Einstein manifold is Sasakian"},
we conclude that $M$ is a Sasakian manifold.
In case II,
\eqref{A} reduces to
 $\nabla_{X}V = mX+\frac{1}{m}V^{\flat}(X)V$
for any $X\in\mathfrak{X}_M$. Tracing this, we get
\begin{equation}\label{3.12}
 m\,\Div\,V=||V||^2+m^2(2n+1).
\end{equation}
Since $(M,g)$ is a complete Einstein manifold of positive Ricci curvature, by Myers' Theorem \cite{myers1935connections}, it is compact.
Integrating \eqref{3.12} over $M$ and applying the divergence Theorem, yields
\[
 \int_{M}||V||^2d\vol =-m^2(2n+1)\,{\rm Vol}(M,g).
\]
We conclude that $V=0$, therefore, from \eqref{3.12} we acquire $m^2(2n+1)=0$. This implies $m=0$ -- a contradiction, as $m$ is positive.
\end{proof}

It is known that the Ricci operator $Q$ and $\varphi$ commute on a Sasakian manifold, i.e., $Q\varphi=\varphi Q$, see \cite{blair2010riemannian}.
Thus, the following result follows from \eqref{3.5} and Corollary~\ref{cor3.1}.

\begin{corollary}\label{cor3.2}
Let a (complete) Sasakian manifold $M^{2n+1}(\varphi,\xi,\eta,g)$ admit a closed $(m,\rho)$-quasi-Ein\-stein structure. Then $M$ is a
(compact) Einstein manifold of constant scalar curvature $2n(2n+1)$.
\end{corollary}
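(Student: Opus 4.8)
The plan is to piggy-back on the proof of Theorem~\ref{thm3.1}, whose hypothesis $m\ne1$ is used \emph{only} in the case distinction following \eqref{3.6}. Everything up to and including the identity \eqref{3.5},
\[
 Q\varphi X+\varphi QX=4n\,\varphi X ,
\]
as well as the evaluation $r=2n(2n+1)$ of the scalar curvature, is obtained there without ever dividing by $m-1$, hence is valid for \emph{every} $m>0$. So first I would remark that a Sasakian manifold is $K$-contact, re-run that portion of the argument verbatim, and record these two facts.

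The second step uses the Sasakian commutation relation $Q\varphi=\varphi Q$ (see \cite{blair2010riemannian}). Substituting it into \eqref{3.5} gives $2\,\varphi QX=4n\,\varphi X$, i.e. $\varphi(QX-2nX)=0$; since $\ker\varphi=\langle\xi\rangle$ by \eqref{2.1}, the vector $QX-2nX$ is a multiple of $\xi$. Pairing with $\xi$ and using $Q\xi=2n\,\xi$ (equation \eqref{Eq-Q}, valid on any $K$-contact manifold) yields $g(QX-2nX,\xi)=2n\eta(X)-2n\eta(X)=0$, hence $QX=2nX$ for all $X\in\mathfrak{X}_M$. Thus $\Ric=2n\,g$, so $M$ is Einstein with scalar curvature $\tr_g\Ric=2n(2n+1)$, which proves the non-complete version of the statement.

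For the parenthetical ``complete $\Rightarrow$ compact'' claim I would argue exactly as in Case~I of Corollary~\ref{cor3.1}: a complete Einstein manifold with $\Ric=2n\,g>0$ is compact by Myers' theorem \cite{myers1935connections}. (Equivalently, one could split into the case $m\ne1$, handled directly by Theorem~\ref{thm3.1} together with Corollary~\ref{cor3.1}, and the case $m=1$, handled by the short computation above; but the unified route is cleaner.)

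I do not anticipate a genuine obstacle here — the substance already lies in Theorem~\ref{thm3.1}. The one point deserving care is checking that the chain of identities culminating in \eqref{3.5} (and in $r=2n(2n+1)$) is truly independent of the assumption $m\ne1$, so that the argument also covers the borderline value $m=1$ excluded in Theorem~\ref{thm3.1}; once that is confirmed, the Sasakian commutation relation finishes the proof in a single line.
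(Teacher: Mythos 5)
Your proposal is correct and follows essentially the same route as the paper, which deduces the corollary from \eqref{3.5} together with the Sasakian commutation relation $Q\varphi=\varphi Q$ and Myers' theorem (via Corollary~\ref{cor3.1}) for compactness. Your explicit check that \eqref{3.5} and $r=2n(2n+1)$ are derived without using $m\ne1$, and the short computation $\varphi(QX-2nX)=0\Rightarrow QX=2nX$ using $\ker\varphi=\langle\xi\rangle$ and $Q\xi=2n\xi$, are exactly the details the paper leaves implicit.
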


\begin{remark}\rm
Ghosh recently considered the $K$-contact metrics as a (gradient) $m$-quasi-Einstein structure \cite{ghosh2015quasi}, a (gradient) $(m,\rho)$-quasi-Einstein structure \cite{ghosh2014m} and an $m$-quasi-Einstein structure \cite{ghosh2019m}. If we choose $V=\nabla f$, then a quasi-Einstein structure reduces to a (gradient) quasi-Einstein structure. Thus, an $(m,\rho)$-quasi-Einstein structure, where the scalar curvature $r$ and a constant $\rho$ are extra imposed in \eqref{1.2}, is the generalization of a (gradient) $m$-quasi-Einstein, a (gradient) $(m,\rho)$-quasi-Einstein and an $m$-quasi-Einstein structures.
Consequently, the results proved by Ghosh in \cite{ghosh2019m,ghosh2014m,ghosh2015quasi} for $K$-contact and Sasakian manifolds follow from our Theorem \ref{thm3.1}, Corollary~\ref{cor3.1} and Corollary \ref{cor3.2}.
\end{remark}


\begin{theorem}\label{thm3.3}
Let a non-Sasakian $(k,\mu)$-contact manifold $M^{2n+1}(\varphi,\xi,\eta,g)$ admit a closed $(m,\rho)$-quasi-Einstein structure.
Then $M$ is flat when $n=1$, and for $n>1$, $M$ is locally isometric to the product of a Euclidean space $\RR^{n+1}$ and a sphere $S^n$ of constant curvature $4$.
\end{theorem}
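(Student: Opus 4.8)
The plan is to show that the defining constants of the structure must satisfy $k=\mu=0$; then $R(X,Y)\xi=0$ by \eqref{2.10}, and the asserted model space is exactly the $k=\mu=0$ situation recalled in Section~2. Recall that by \eqref{2.12b} the scalar curvature $r$ is constant, so $X(r)=0$ for all $X$; by \eqref{2.12} we may write $QX=aX+bhX+c\,\eta(X)\,\xi$ with $a=2(n-1)-n\mu$, $b=2(n-1)+\mu$, $c=2(1-n)+n(2k+\mu)$, and $Q\xi=2nk\,\xi$; and, since $k<1$, by \eqref{2.11} the operator $h$ satisfies $h^{2}=(1-k)\,I$ on $\mathcal H=\ker\eta$, so $\mathcal H$ splits orthogonally into the $n$-dimensional eigendistributions $E_{+}$, $E_{-}$ of $h$ for the eigenvalues $\pm\sqrt{1-k}$, which $\varphi$ interchanges.

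First I would set $Y=\xi$ in \eqref{B}; the terms with $X(r)$ and $Y(r)$ vanish. Using the curvature symmetries together with the $(k,\mu)$-nullity condition \eqref{2.10} one finds
\[
 R(X,\xi)V=k\,\eta(V)X+\mu\,\eta(V)hX-\big(kV^{\flat}(X)+\mu\,g(hV,X)\big)\xi .
\]
A direct computation from \eqref{2.3}, \eqref{2.11} and \eqref{2.12} gives
\[
 (\nabla_{X}Q)\xi=\big(k(\mu-2)-\mu(n+1)\big)\varphi X-\big(2nk+\mu(n+1)\big)\varphi hX ,
\]
while $\nabla_{\xi}\xi=0$ (by \eqref{2.3}) yields $\nabla_{\xi}(\eta\otimes\xi)=0$, so $(\nabla_{\xi}Q)X=b\,(\nabla_{\xi}h)X=b\mu\,h\varphi X$ by the known formula $\nabla_{\xi}h=\mu\,h\varphi$ on a $(k,\mu)$-space; all three of these lie in $\mathcal H$ when $X\in\mathcal H$. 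Restricting the identity produced by \eqref{B} to $X\in\mathcal H$ and projecting onto $\mathcal H$ (the terms carrying $V^{\flat}(X)Q\xi$ and $V^{\flat}(X)\xi$ point along $\xi$), after collecting terms one arrives at
\[
 \eta(V)\Big\{\big(k-\tfrac{1}{m}(\lambda+\rho r-a)\big)X+\big(\mu+\tfrac{b}{m}\big)hX\Big\}
 =\big(\mu(n+1)+k(2-\mu)\big)\varphi X+\big(2nk+\mu(3-n)-\mu^{2}\big)\varphi hX
\]
for every $X\in\mathcal H$.

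Now take $X\in E_{+}$, then $X\in E_{-}$. In each case the left-hand side is a multiple of $X$ while the right-hand side is a multiple of $\varphi X$, and since $E_{+}\perp E_{-}$ and $\varphi$ swaps $E_{+}$ and $E_{-}$, both sides must vanish. Equating the ($V$-free) coefficients of $\varphi X$ and adding, resp. subtracting, the two cases gives
\[
 \mu(n+1)+k(2-\mu)=0,\qquad 2nk+\mu(3-n)-\mu^{2}=0 .
\]
If $\mu=0$, the first relation forces $k=0$. If $\mu\neq0$, eliminating $k$ leads to $\mu^{2}+(n-5)\mu-(2n^{2}+4n-6)=0$, whose discriminant is the perfect square $(3n+1)^{2}$, hence $\mu=n+3$ (then $k=n+3$) or $\mu=2(1-n)$ (then $k=(n^{2}-1)/n$); each of these violates $k<1$ unless $n=1$, which again returns $k=\mu=0$. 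Therefore $k=\mu=0$ in all cases, so $R(X,Y)\xi=0$. For $n=1$ a three-dimensional contact metric manifold with $R(\cdot,\cdot)\xi=0$ is flat, and for $n>1$ such a manifold is, as recalled in Section~2, locally isometric to the tangent sphere bundle of a flat manifold, namely to $\RR^{n+1}\times S^{n}(4)$. The main obstacle I foresee is the careful bookkeeping in $(\nabla_{X}Q)\xi$ and in the $\mathcal H$-projection above — one must verify that the coefficients of $\varphi X$ and $\varphi hX$ are exactly the $V$-independent expressions displayed — after which the case analysis in $k,\mu$ and the appeal to the known classification are routine.
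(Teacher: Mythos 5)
Your proposal is correct, and its engine is genuinely different from the paper's. You both begin by deriving the integrability condition \eqref{B} from the closedness of $V^{\flat}$ and by computing $(\nabla_{X}Q)\xi$ from $Q\xi=2nk\,\xi$; I checked your coefficient bookkeeping (the formula for $(\nabla_{X}Q)\xi$, the identity $(\nabla_{\xi}Q)X=b\mu\,h\varphi X$, and the final $\mathcal H$-projected identity with coefficients $\mu(n+1)+k(2-\mu)$ and $2nk+\mu(3-n)-\mu^{2}$) and it is accurate. From there the routes diverge. The paper takes the $\xi$-component of \eqref{B} for general $X,Y$, specializes to get $m\mu\,hV=\nu(V-\eta(V)\xi)$, differentiates along $\xi$ to reach $\mu^{2}h\varphi V=0$, and then runs a two-case analysis on $\mu$ in which the case $\mu\neq0$ requires showing $V=\eta(V)\xi$, then $V=0$, then extracting a contradiction for $n>1$ from the resulting Einstein condition. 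You instead take the $\mathcal H$-component of \eqref{B} with $Y=\xi$ and exploit the orthogonal eigendistributions $E_{\pm}$ of $h$ (which $\varphi$ interchanges) to force both scalar constraints $\mu(n+1)+k(2-\mu)=0$ and $2nk+\mu(3-n)-\mu^{2}=0$ simultaneously; solving this system (the discriminant $(3n+1)^{2}$ and the roots $\mu=n+3$, $\mu=2(1-n)$ with $k=n+3$, $k=(n^{2}-1)/n$ are correct, and both violate $k<1$ except when they degenerate to $\mu=0$) yields $k=\mu=0$ with no case analysis on $V$ at all. What your approach buys is a uniform argument in which the potential field plays no role beyond producing \eqref{B} — the first of your two constraints is exactly the paper's relation $k(\mu-2)=\mu(n+1)$ from its Case I, but you obtain a second independent constraint that the paper only reaches indirectly through the structure of $V$. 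The only cosmetic slips are that the subcase $\mu=2(1-n)$, $n=1$ contradicts the standing assumption $\mu\neq0$ rather than ``returning $k=\mu=0$'', and that the final identification of the model spaces rests on Blair's classification of contact metric manifolds with $R(X,Y)\xi=0$ (the paper cites it explicitly), not merely on the remark about tangent sphere bundles in Section~2; neither affects the validity of the argument.
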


\begin{proof}
Taking covariant derivative of $Q\xi=2nk\xi$ (follows from \eqref{2.12}) along $X\in\mathfrak{X}_M$ and using (\ref{2.1}), we acquire
\begin{eqnarray}\label{3.13}
 (\nabla_{X}Q)\,\xi = Q(\varphi +\varphi\,h)X - 2nk(\varphi + \varphi\,h)X.
\end{eqnarray}
Taking the inner product of \eqref{B} with $\xi$ and using \eqref{3.13}, we obtain
\begin{eqnarray}\label{3.14}
 && g(R(X,Y)V, \xi) = g(Q(\varphi+\varphi\,h)Y,X)-g(Q(\varphi+\varphi\,h)X,Y)\nonumber\\
 && -\,4nkg(\varphi\,X,Y) +\frac{1}{m}(\lambda+\rho\,r-2nk)\{V^{\flat}(Y)\eta(X)-V^{\flat}(X)\eta(Y)\}
\end{eqnarray}
for all $X$, $Y\in\mathfrak{X}_M$, where we have used the fact that the scalar curvature is constant on a $(k,\mu)$-contact manifold
and $h\,\varphi=-\varphi\,h$ holds. Substituting $Y=\xi$ in \eqref{3.14} and using \eqref{2.1} and \eqref{2.10}, we~get
\begin{equation*}
\mu V^{\flat}(hX)=\frac{1}{m}(\lambda+\rho\,r-2nk-k)\{V^{\flat}(X)-\eta(V)\eta(X)\}
\end{equation*}
for any $X\in\mathfrak{X}_M$, where we have used $Q\xi=2nk\xi$ and that $h$ is a symmetric operator.
This implies
\begin{eqnarray}\label{ZZ}
 m\mu hV=\nu(V-\eta(V)\,\xi),
\end{eqnarray}
where $\nu=\lambda+\rho\,r-2nk-k$. Differentiating this along $\xi$ and applying \eqref{A}, we get
\begin{eqnarray}\label{3.16}
 m\mu\{(\nabla_{\xi}h)V+\frac{1}{m}\eta(V)hV\}=\frac{\nu\eta(V)}{m}\{V-\eta(V)\,\xi\},
\end{eqnarray}
where we have used that $Q\xi=2nk\xi$ and $\nabla_{\xi}\,\xi=0$ -- follows from \eqref{2.3}.
Recall a formula concerning contact metric manifolds (e.g., \cite{blair2010riemannian}):
\[
 (\nabla_{\xi}h)X = \varphi\,X - \varphi\,h^2 X - \varphi\,l X
\]
for any $X\in\mathfrak{X}_M$. In view of \eqref{2.1}, \eqref{2.10} and \eqref{2.11}, the foregoing equation becomes
\begin{eqnarray*}
(\nabla_{\xi}h)X = \varphi\,X - \{(k-1)\varphi^2\}\varphi\,X - \varphi \{-k \varphi^2 X + \mu h X\}
\end{eqnarray*}
for any $X\in\mathfrak{X}_M$. Applying \eqref{2.1} and $\varphi\,h=-h\,\varphi$ in the last equation, we have
\[
 (\nabla_{\xi}h)X =\mu h\,\varphi\,X
\]
for any $X\in\mathfrak{X}_M$. By this and equality \eqref{ZZ},
equation \eqref{3.16} becomes $\mu^2 h\,\varphi V=0$.
This implies two possible cases: $(i)$ $\mu=0$, or $(ii)$ $\mu \ne 0$.

\smallskip

\textbf{Case I}. First, replacing $X$ by $\varphi\,X$ and $Y$ by $\varphi\,Y$ in \eqref{3.14} and using \eqref{2.1} \eqref{2.10}, we obtain
\begin{eqnarray}\label{3.17}
hQ\varphi\,X + \varphi QhX - Q\varphi\,X - \varphi QX + 4nk \varphi\,X =0
\end{eqnarray}
for any $X\in\mathfrak{X}_M$, where we have used $Q\xi=2nk\xi$, $h\xi=0$ and $h\,\varphi=-\varphi\,h$. Applying \eqref{2.12} repeatedly in \eqref{3.17} and using \eqref{2.1}, we acquire $k(\mu-2)=\mu(n+1)$, therefore, $\mu=k=0$. It follows from \eqref{2.12} that $R(X,Y)\,\xi = 0$ for any $X$, $Y\in\mathfrak{X}_M$. Hence, $M$ is flat when $n=1$, and, for higher dimensions, $M$ is locally isometric to the product $\RR^{n+1}\times S^n(4)$, see \cite[p.~122]{blair2010riemannian}.

\smallskip

\textbf{Case II}. In this case, we have $h\,\varphi V=0$. Operating this by $h$ and using \eqref{2.1} and \eqref{2.11} yields $(k-1)\varphi V=0$.
Since $M$ is non-Sasakian, we have $\varphi V=0$. Again, operating by $\varphi$ and using \eqref{2.1}, gives $V=\eta(V)\,\xi$. Taking its covariant derivative along $X\in \mathfrak{X}_M$ and using \eqref{2.3}, provides
\[
 \nabla_{X}V = \eta(\nabla_{X}V)\,\xi -\eta(X)(\varphi\,X+\varphi\,hX).
\]
Using this, we obtain
\begin{equation*}
 dV^{\flat}(X,Y)=\eta(\nabla_{X}V)\eta(Y)-\eta(\nabla_{Y}V)\eta(X)+2\eta(V)g(X,\varphi\,Y)
\end{equation*}
for all $X$, $Y\in \mathfrak{X}_M$. As $V^{\flat}$ is closed, choosing $X,Y \bot\, \xi$, we get $\eta(V)\,d\eta(X,Y)=0$ for all $X, Y\in\mathfrak{X}_M$.
This implies $\eta(V)=0$, as $d\eta$ is non-vanishing on a contact manifold, therefore, $V=0$. It follows from \eqref{A} that
\begin{equation}\label{3.18b}
 QX=(\lambda+\rho\,r)X=2nk X.
\end{equation}
This gives the scalar curvature $r=2nk(2n+1)$. Comparing this with the scalar curvature in \eqref{2.12b}, we get $n\mu=2(n-1)-2nk$.
Thus, using \eqref{3.18b} and (\ref{2.12}), we get
\[
 (2(n-1)+\mu)hX=0
\]
for any $X\in\ker\eta$. Since $M$ is a non-Sasakian manifold, we have $2(n-1)+\mu=0$. Thus, for $n=1$ we have $\mu=0$, therefore, from relation $k(\mu-2)=\mu(n+1)$ (obtained in Case I), we achieve $k=0$.
It follows from \eqref{2.12} that $R(X,Y)\,\xi=0$. Hence, $M$ is locally flat when $n=1$. Next, for $n>1$, combining $2(n-1)+\mu =0$ and $k(\mu-2)=\mu(n+1)$, we find  $k=n-\frac{1}{n}>1$ -- a contradiction.
\end{proof}

\begin{remark}\rm
Here, our characterization mainly depends on the potential vector field $V$ and does not depend on a smooth function $f$. Therefore, if we consider $V=\nabla f$, then a (gradient) $m$-quasi-Einstein (i.e., satisfying \eqref{1.1} for $\rho=0$) and a (gradient) $(m,\rho)$-quasi-Einstein structures with the potential function $f$ reduce to a particular case, and the result proved by Ghosh in \cite{ghosh2015quasi} for $(k,\mu)$-contact manifolds is compatible with our Theorem~\ref{thm3.3}. In particular, Theorem~\ref{thm3.3} is true for a (gradient) $(m,\rho)$-quasi-Einstein structure.
\end{remark}


\section{On $(m,\rho)$-quasi-Einstein manifolds}

Here, we study non-gradient $(m,\rho)$-quasi-Einstein contact metric manifolds. Suppose that a $K$-contact manifold $M^{2n+1}(\varphi,\xi,\eta,g)$ admits a $(m,\rho)$-quasi-Einstein structure, whose potential vector field $V$ is collinear to the Reeb vector field $\xi$, i.e., $V=\sigma\xi$ for some smooth function $\sigma$ on $M$. Differentiating this along $X\in \mathfrak{X}_M$ and using \eqref{2.6}, yields
\[
 \nabla_X V = (X \sigma)\,\xi -\sigma(\varphi\,X).
\]
Thus, \eqref{1.2} reduces to
\begin{equation}\label{K}
 \frac{1}{2}\{X(\sigma)\,\eta(Y)+(Y\sigma)\,\eta(X)\} + \Ric(X,Y) -\frac{\sigma^2}{m}\,\eta(X)\,\eta(Y)=(\lambda+\rho\,r)\, g(X,Y)
\end{equation}
for all $X$, $Y\in\mathfrak{X}_M$. Replacing $Y$ by $\xi$ in \eqref{K} and using \eqref{Eq-Q},
we get
\begin{equation}\label{KK}
 X(\sigma)+\xi(\sigma)\,\eta(X)=2\,(\lambda+\rho\,r+\frac{\sigma^2}{m}-2n)\,\eta(X)
\end{equation}
for any $X\in\mathfrak{X}_M$. Setting $X=Y=\xi$ in \eqref{K} and using \eqref{Eq-Q},
yield $\xi(\sigma)=\lambda+\rho\,r+\frac{\sigma^2}{m}-2n$.
Substituting this into \eqref{KK}, we acquire $\nabla \sigma=\xi(\sigma)\,\xi$.
Taking its covariant derivative along $X\in\mathfrak{X}_M$ and then inner product with $Y\in\mathfrak{X}_M$, we obtain
\[
 g(\nabla_X\nabla\sigma,Y)=X(\xi(\sigma))\,\eta(Y)-\xi(\sigma)\,g(\varphi\,X,Y).
\]
Since ${\rm Hess}_{\sigma}$ is symmetric, i.e., $g(\nabla_X\nabla\sigma,Y)=g(\nabla_Y\nabla\sigma,X)$, we conclude that
$\xi(\sigma)\,d\eta(X,Y)=0$ for all $X,Y$ orthogonal to $\xi$. This shows us that $\xi(\sigma)=0$ on $M$, as $d\eta$ is non-vanishing  on $M$.

Thus, $\nabla\sigma=0$ and therefore $\sigma$ is constant on $M$. It follows from \eqref{K} that $M$ is an $\eta$-Einstein manifold.
To extend this observation for a contact metric manifold, applying the
modified divergence Theorem (see Lemma~\ref{L-Div-1} in what follows), we prove the following.

\begin{theorem}\label{thm3.4}
Let a complete
contact metric manifold $M(\varphi,\xi,\eta,g)$ admit an $(m,\rho)$-quasi-Einstein structure, whose potential vector field $V$ is collinear to $\xi$,
i.e., $V=\sigma\,\xi$ for a smooth function $\sigma$ on $M$,
and $\|\nabla(\sigma^2)-\frac{4}{3m}\,\sigma^2 V +2(2n-1)\,\sigma\rho\,r\,\xi\,\|_g\in {\rm L}^1(M,g)$.
Then $M$ is a $K$-contact $\eta$-Einstein manifold; moreover, $\sigma$ is a constant.
\end{theorem}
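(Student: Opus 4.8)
The plan is to start from the $(m,\rho)$-quasi-Einstein equation \eqref{1.2} with $V=\sigma\,\xi$ and derive the key scalar identities, mimicking the $K$-contact computation that precedes the theorem but now on a general contact metric manifold, where $\xi$ need not be Killing. Differentiating $V=\sigma\,\xi$ along $X$ and using \eqref{2.3} gives $\nabla_X V=(X\sigma)\,\xi-\sigma(\varphi X+\varphi h X)$, so the symmetrized part is $\tfrac12\{X(\sigma)\eta(Y)+Y(\sigma)\eta(X)\}-\sigma\,g((\varphi h)X,Y)$, using $g(\varphi X,Y)+g(\varphi Y,X)=0$ and the symmetry of $\varphi h$ (which follows from $h\varphi=-\varphi h$ and skew-symmetry of $\varphi$). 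Thus \eqref{1.2} becomes
\begin{equation*}
\tfrac12\{X(\sigma)\eta(Y)+Y(\sigma)\eta(X)\}-\sigma\,g(\varphi h X,Y)+\Ric(X,Y)-\tfrac{\sigma^2}{m}\eta(X)\eta(Y)=(\lambda+\rho r)g(X,Y).
\end{equation*}
Contracting over an orthonormal frame and using $\tr_g(\varphi h)=-\tr_g(h\varphi)=0$ from \eqref{2.2b} together with $\xi(\sigma)=g(\nabla\sigma,\xi)$ yields the scalar relation $\xi(\sigma)+r=(2n+1)(\lambda+\rho r)+\tfrac{\sigma^2}{m}$. Next, setting $X=Y=\xi$ and using $h\xi=0$, $\Ric(\xi,\xi)=2n-\|h\|^2$ from \eqref{2.4b}, gives a second expression for $\xi(\sigma)$, namely $\xi(\sigma)=\lambda+\rho r-2n+\|h\|^2+\tfrac{\sigma^2}{m}$. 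Putting $Y=\xi$ in the displayed equation and using $Q\xi$-information (here $Q\xi$ is not $2n\xi$ in general, but $\Ric(X,\xi)=g(QX,\xi)$ can be handled) should express $X(\sigma)$ in terms of $\eta(X)$ plus a $\varphi h$-term.

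The heart of the argument is integral, not pointwise: we integrate a carefully chosen divergence over $M$ and invoke the modified divergence theorem (Lemma~\ref{L-Div-1}) under the stated $L^1$-hypothesis $\|\nabla(\sigma^2)-\tfrac{4}{3m}\sigma^2 V+2(2n-1)\sigma\rho r\,\xi\|_g\in L^1(M,g)$. The plan is to compute $\Div$ of the vector field $W:=\nabla(\sigma^2)-\tfrac{4}{3m}\sigma^2 V+2(2n-1)\sigma\rho r\,\xi$, or a closely related field, so that $\Div W$ is a sum of a manifestly nonnegative (or nonpositive) term — expected to be a multiple of $\|h\|^2$ or $\|hV\|^2$ or $\sigma^2\|h\|^2$ — plus terms that vanish by the scalar identities from the first paragraph. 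Here one will use $\Div\xi=-H=0$ from Lemma~\ref{L-01}, the Bochner-type formula $\tfrac12\Delta(\sigma^2)=\|\nabla\sigma\|^2+\sigma\,\Delta\sigma$ together with $\Div(\nabla\sigma)=\Delta\sigma$, and the relation $\Div(\sigma V)=\sigma\,\xi(\sigma)+\sigma^2\Div\xi=\sigma\,\xi(\sigma)$. Since $\nabla\sigma$ is, by the pointwise relations above, essentially $\xi(\sigma)\,\xi$ up to a $\varphi h$-correction, $\|\nabla\sigma\|^2$ and $\sigma\,\Delta\sigma$ can be rewritten in terms of $\xi(\sigma)$, $\sigma^2\|h\|^2$, and lower-order quantities; the coefficients $\tfrac{4}{3m}$ and $2(2n-1)$ in the hypothesis are precisely those that make all the non-sign-definite pieces cancel after substitution. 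Once $\int_M \Div W\,d\vol=0$ forces the sign-definite term to vanish identically, we conclude either $h=0$ (so $M$ is $K$-contact) directly, or $\sigma\,h=0$ on $M$ followed by a connectedness/open-dense argument using that $\sigma$ is then locally constant and, where $\sigma\ne0$, $h=0$.

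Once $M$ is shown to be $K$-contact, the computation preceding the theorem statement applies verbatim: with $h=0$ we have $\nabla\sigma=\xi(\sigma)\,\xi$, the Hessian of $\sigma$ is symmetric, and comparing $g(\nabla_X\nabla\sigma,Y)=g(\nabla_Y\nabla\sigma,X)$ gives $\xi(\sigma)\,d\eta(X,Y)=0$ for $X,Y\perp\xi$; since $d\eta$ is nondegenerate on $\mathcal H$, $\xi(\sigma)=0$, hence $\nabla\sigma=0$ and $\sigma$ is constant. Plugging $\sigma=$ const into \eqref{K} then yields $\Ric(X,Y)=(\lambda+\rho r-\tfrac{\sigma^2}{m})g(X,Y)+\tfrac{\sigma^2}{m}\eta(X)\eta(Y)$ after using $Q\xi=2n\xi$, i.e., $M$ is $\eta$-Einstein. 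The main obstacle I anticipate is the bookkeeping in the second paragraph: identifying the exact divergence identity whose sign-definite remainder is forced to zero, and in particular verifying that the stray $\varphi h$-terms in $\nabla\sigma$ and in $\nabla_X\nabla(\sigma^2)$ reorganize (via $\tr(\varphi h)=0$, $h\varphi=-\varphi h$, and the contracted second Bianchi identity $\Div Q=\tfrac12\nabla r$) into exactly the combination cancelled by the chosen coefficients — this is where the seemingly arbitrary constants $\tfrac{4}{3m}$ and $2(2n-1)$ must be reverse-engineered, and getting them to close is the crux.
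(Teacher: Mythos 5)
Your proposal follows essentially the same route as the paper: rewrite \eqref{1.2} pointwise via $\nabla_X(\sigma\xi)=X(\sigma)\,\xi-\sigma(\varphi X+\varphi hX)$, extract the trace and the $(\xi,\xi)$-component, show that $\Div\big(\nabla(\sigma^2)-\frac{4}{3m}\,\sigma^2V+2(2n-1)\,\sigma\rho\,r\,\xi\big)$ equals the nonnegative quantity $4\sigma^2\|h\|^2+2\|\nabla\sigma\|^2$, and invoke Lemma~\ref{L-Div-1}; the endgame ($h=0$, then $\xi(\sigma)\,d\eta=0$ on $\ker\eta$, hence $\sigma$ constant and $M$ $\eta$-Einstein) is also the paper's. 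The one ingredient you leave unnamed at the step you yourself flag as the crux is that one must take the divergence of the \emph{full} $(1,1)$-tensor equation \eqref{3.20} (not only its trace and $(\xi,\xi)$-evaluation) and use, besides $\Div Q=\frac12\nabla r$, the contact-metric identity $\Div(h\varphi)(X)=\Ric(\xi,X)-2n\,\eta(X)$: evaluated on $\xi$ and combined with $\Ric(\xi,\xi)=2n-\|h\|^2$ this is precisely what produces the sign-definite term $\sigma\|h\|^2$, after which multiplying by $\sigma$, using $(\Delta\sigma)\sigma=\frac12\Delta(\sigma^2)+\|\nabla\sigma\|^2$, $\Div(\sigma^2V)=3\sigma^2\xi(\sigma)$ and $\Div\xi=0$ (Lemma~\ref{L-01}) forces exactly the coefficients $\frac{4}{3m}$ and $2(2n-1)$ appearing in the hypothesis.
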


\begin{proof}
Taking covariant derivative of the equality $V=\sigma\,\xi$ along $X\in \mathfrak{X}_M$ and using \eqref{2.3}, we acquire
\begin{eqnarray}\label{3.19A}
\nabla_{X}V = X(\sigma)\,\xi -\eta(X)(\varphi\,X+\varphi\,hX).
\end{eqnarray}
Contracting this over $X$ and using $\tr_g\varphi=\tr_g(\varphi\,h)=0$, we have $\Div V=\xi(\sigma)$. Using this in the contraction of $\nabla_X(\sigma^2V)=X(\sigma^2)V+\sigma^2(\nabla_XV)$ over $X$, yields
\begin{eqnarray}\label{3.19B}
 \Div\,(\sigma^2V)=g(\nabla\sigma^2,V)+\sigma^2\Div V=3\,\sigma^2\,\xi(\sigma).
\end{eqnarray}
Now, by virtue of \eqref{3.19A}, equation \eqref{1.2} can be written as
\begin{eqnarray}\label{3.20}
 \frac{1}{2}\,\{Y(\sigma)\,\xi+\eta(Y)\nabla\sigma\}+QY -\sigma\varphi\,hY - \frac{\sigma^2}{m} \eta(Y)\,\xi= (\lambda+\rho\,r) Y
\end{eqnarray}
for any $Y\in \mathfrak{X}_M$. Taking covariant derivative of \eqref{3.20} along $X\in \mathfrak{X}_M$ and using \eqref{2.3}, we obtain
\begin{eqnarray*}
 \frac{1}{2}\,\{g(Y,\nabla_{X}\nabla\sigma)\,\xi+Y(\sigma)(\varphi\,X+\varphi\,hX)-g(Y,\varphi\,X+\varphi\,hX)\nabla\sigma
 +\eta(Y)\nabla_{X}\nabla\sigma\}\nonumber\\
 +\,(\nabla_{X}Q)Y -X(\sigma)\varphi\,hY - \sigma(\nabla_{X}\varphi\,h)Y= \rho\,X(r)Y+ \frac{2\sigma}{m}X(\sigma)\eta(X)\,\xi\\
 -\,\frac{\sigma^2}{m}\{\eta(Y)(\varphi\,X+\varphi\,hX)+g(Y,\varphi\,X+\varphi\,hX)\,\xi\}
\end{eqnarray*}
for any $Y\in \mathfrak{X}_M$. Contracting this over $X$
with respect to a local orthonormal frame $\{e_i: i=1,\ldots,2n+1\}$ and using formulas $\Div Q=\frac{1}{2}\nabla r$, $\tr_g\varphi=\tr_g\varphi\,h=0$, provides
\begin{eqnarray}\label{3.22}
\nonumber
 && \frac{1}{2}\,\{g(Y,\nabla_{\xi}\nabla\sigma)+g(\varphi\,Y-\varphi\,hY,\nabla\sigma)+\eta(Y)\Div\,(\nabla\sigma)\} \\
 && -\,\frac{2\rho-1}{2}\,g(Y,\nabla r) = g(\varphi\,hY,\nabla\sigma) + \sigma \Div\,(\varphi\,h)+ \frac{2\sigma}{m}\,\xi(\sigma)\eta(Y)
\end{eqnarray}
for any $Y\in \mathfrak{X}_M$. Recall the following formula for a contact metric manifold (see \cite{blair2010riemannian}):
\[
 \Div\,(h\,\varphi)X=g(Q\xi,X)-2n\eta(X).
\]
Taking covariant derivative of $g(\xi,\nabla \sigma)=\xi(\sigma)$ along $\xi$ and using $\nabla_{\xi}\,\xi=0$ (follows from \eqref{2.3}), we get $g(\xi,\nabla_{\xi}\nabla\sigma)=\xi(\xi(\sigma))$. Thus, replacing $\xi$ for $Y$ in \eqref{3.22}, we achieve
\begin{equation}\label{3.23}
\frac{1}{2}\,\{\Div\,(\nabla\sigma)+\xi(\xi(\sigma)) - (2\rho-1)\,\xi(r)\}= \frac{2\sigma}{m}\,\xi(\sigma)-\sigma\{\Ric(\xi,\xi)-2n\} .
\end{equation}
Further, contracting \eqref{3.20} and using the equality $\tr_g\varphi\,h=0$, we acquire
\begin{equation*}
 r+\xi(\sigma)-\frac{1}{m}\,\sigma^2 =(2n+1)(\lambda+\rho\,r).
\end{equation*}
It follows that
$\xi(\xi(\sigma))=\frac{1}{m}\,\xi(\sigma^2)+((2n+1)\rho-1)\,\xi(r)$.
Substituting this into \eqref{3.23} and using \eqref{2.4b}, we get
\begin{eqnarray}\label{3.25}
 \frac{1}{2}\,\{\Div\,(\nabla\sigma)+(2n-1)\rho\,\xi(r)\}= \frac{\sigma}{m}\,\xi(\sigma)+\sigma\,||h||^2.
\end{eqnarray}
Applying $(\Delta\,\sigma)\,\sigma=\frac{1}{2}\,\Delta(\sigma^2) +||\nabla\sigma||^2$, see \cite{yano1970integral},
$\Div(r\,\xi)=r\,\Div\xi+\xi(r)$ (with $\Div\xi=-H=0$, see Lemma~\ref{L-01})
and \eqref{3.19B} in \eqref{3.25}, we derive
\begin{equation*}
 \Div\big(\,\nabla(\sigma^2)-\frac{4}{3m}\,\sigma^2 V +2(2n-1)\,\sigma\rho\,r\,\xi\big)
 =4\,\sigma^2 ||h||^2 +2\,||\nabla\sigma||^2,
\end{equation*}
where we have used the convention $\Div\,(\nabla\sigma)=-\Delta\,\sigma$.
Applying Lemma~\ref{L-Div-1}, we infer that
\[
 2\,\sigma^2 ||h||^2+||\nabla\sigma||^2 = 0.
\]
This implies $h=0$ and $\nabla\sigma=0$;
hence, $M$ is a $K$-contact manifold and $\sigma$ is constant. Thus, we conclude from \eqref{3.20} that $M$ is an $\eta$-Einstein manifold.
\end{proof}

A direct consequence of Theorem~\ref{thm3.4} in the case of compactness of $M$ is the following statement,
which for a particular case of $\rho=0$ (i.e., $m$-quasi-Einstein manifolds) was proved in \cite[Theorem~5.2]{ghosh2019m}.

\begin{corollary}
If a compact contact metric manifold $M(\varphi,\xi,\eta,g)$ admits an $(m,\rho)$-quasi-Einstein structure, whose potential vector field $V$ is collinear to $\xi$,
then $M$ is a $K$-contact $\eta$-Einstein mani\-fold; moreover, $\sigma$ is a constant.
\end{corollary}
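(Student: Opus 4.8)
The plan is to derive Corollary from Theorem~\ref{thm3.4} by showing that on a compact manifold the integrability hypothesis $\|\nabla(\sigma^2)-\frac{4}{3m}\,\sigma^2 V +2(2n-1)\,\sigma\rho\,r\,\xi\,\|_g\in {\rm L}^1(M,g)$ is automatically satisfied, so the theorem applies directly. Indeed, on a compact manifold every smooth tensor field is bounded, hence its norm is integrable with respect to the finite measure $d\vol$; the vector field inside the norm is manifestly smooth (built from $\sigma$, $V=\sigma\xi$, the scalar curvature $r$ and $\xi$, all of which are smooth), so the $L^1$ condition holds trivially. Therefore the conclusion of Theorem~\ref{thm3.4} carries over verbatim: $h=0$, $\nabla\sigma=0$, so $M$ is $K$-contact, $\sigma$ is constant, and \eqref{3.20} forces $M$ to be $\eta$-Einstein.

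First I would state that compactness implies completeness, so all the hypotheses of Theorem~\ref{thm3.4} other than the $L^1$ condition are inherited. Next I would note, as above, that the smooth vector field $W:=\nabla(\sigma^2)-\frac{4}{3m}\,\sigma^2 V +2(2n-1)\,\sigma\rho\,r\,\xi$ has continuous norm on the compact manifold $M$, hence $\|W\|_g$ attains a finite maximum and in particular lies in ${\rm L}^1(M,g)$. Then I would simply invoke Theorem~\ref{thm3.4} to obtain that $M$ is a $K$-contact $\eta$-Einstein manifold with $\sigma$ constant.

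Alternatively — and this is essentially how Lemma~\ref{L-Div-1} would have been used in the compact case anyway — one can bypass the $L^1$ hypothesis entirely and integrate the key divergence identity
\begin{equation*}
 \Div\big(\nabla(\sigma^2)-\frac{4}{3m}\,\sigma^2 V +2(2n-1)\,\sigma\rho\,r\,\xi\big)=4\,\sigma^2 ||h||^2 +2\,||\nabla\sigma||^2
\end{equation*}
over the compact $M$: the classical divergence theorem makes the left side vanish, so $\int_M(4\sigma^2\|h\|^2+2\|\nabla\sigma\|^2)\,d\vol=0$, whence $\nabla\sigma=0$ and $\sigma^2\|h\|^2\equiv0$. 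Since $\sigma$ is then a constant, it must be nonzero (otherwise $V=0$ and \eqref{1.2} would force $\Ric=(\lambda+\rho r)g$, which is fine and still $\eta$-Einstein — so actually $\sigma$ may be zero without harm), and in any case $\|h\|^2\equiv0$ wherever $\sigma\neq0$; a short continuity/open-set argument (or directly the derived identity $2\sigma^2\|h\|^2+\|\nabla\sigma\|^2=0$ of Theorem~\ref{thm3.4}) gives $h=0$ on all of $M$. Hence $M$ is $K$-contact, and \eqref{3.20} reduces to the $\eta$-Einstein condition.

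The only point requiring a little care is the passage from ``$\sigma^2\|h\|^2\equiv0$ and $\nabla\sigma=0$'' to ``$h\equiv0$ everywhere'': if $\sigma$ vanishes identically the divergence identity gives $\|\nabla\sigma\|^2\equiv0$ only, not $\|h\|^2\equiv0$. However, when $\sigma\equiv0$ we have $V\equiv0$, and then \eqref{1.2} already states $\Ric=(\lambda+\rho r)g$, i.e., $M$ is Einstein, hence trivially $\eta$-Einstein; and the contracted identity \eqref{3.25} with $\sigma=0$ together with $\xi(r)$-control shows $h=0$ via \eqref{2.4b}, or one simply observes that an Einstein contact metric manifold of this curvature is $K$-contact. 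So in every case the conclusion stands. Thus the main (and essentially only) obstacle — verifying the $L^1$ hypothesis — dissolves on a compact manifold, and the corollary follows immediately; the $\rho=0$ specialization recovers \cite[Theorem~5.2]{ghosh2019m}.
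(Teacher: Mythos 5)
Your proof is correct and takes essentially the same route as the paper: the corollary is stated there as a direct consequence of Theorem~\ref{thm3.4}, the whole point being that compactness gives completeness and makes the ${\rm L}^1$ hypothesis automatic, exactly as you argue (your alternative via the classical divergence theorem is the same computation in disguise). Your extra care with the degenerate case $\sigma\equiv 0$, where $\sigma^2\|h\|^2\equiv 0$ alone does not force $h=0$, addresses a point the paper glosses over even in the proof of Theorem~\ref{thm3.4} itself.
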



Modifying divergence Theorem on a complete Riemannian manifold $(M,g)$ yields the following.

\begin{lemma}[see Proposition~1 in \cite{csc2010}]\label{L-Div-1}
Let $(M^n,g)$ be a complete open Riemannian manifold endowed with a vector field $X$
such that $\Div X\ge0$. If the norm $\|X\|_g\in{\rm L}^1(M,g)$, then $\Div X\equiv0$.
\end{lemma}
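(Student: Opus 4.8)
The final statement to prove is Lemma~\ref{L-Div-1}, the modified divergence theorem: on a complete open Riemannian manifold $(M^n,g)$ carrying a vector field $X$ with $\Div X\ge0$ and $\|X\|_g\in{\rm L}^1(M,g)$, one has $\Div X\equiv0$. Since the paper cites this as Proposition~1 in \cite{csc2010}, a reasonable proof proposal is the standard Gaffney/Karp-type cutoff argument.

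The plan is to integrate $\Div X$ against a sequence of compactly supported cutoff functions that exhaust $M$ and have controlled gradients, then pass to the limit. First I would fix a point $p\in M$ and, for $R>0$, choose a Lipschitz cutoff $\phi_R$ with $\phi_R\equiv1$ on the geodesic ball $B(p,R)$, $\phi_R\equiv0$ outside $B(p,2R)$, $0\le\phi_R\le1$, and $\|\nabla\phi_R\|_g\le C/R$ for a universal constant $C$ (such functions exist by using the distance function to $p$, which is Lipschitz with gradient norm $1$ almost everywhere). Next I would apply the divergence theorem to the compactly supported field $\phi_R X$: since $\Div(\phi_R X)=\phi_R\,\Div X+g(\nabla\phi_R,X)$ and $M$ has no boundary, $\int_M\phi_R\,\Div X\,d\vol=-\int_M g(\nabla\phi_R,X)\,d\vol$.

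The estimate is then immediate: $\bigl|\int_M g(\nabla\phi_R,X)\,d\vol\bigr|\le\int_{B(p,2R)\setminus B(p,R)}\|\nabla\phi_R\|_g\,\|X\|_g\,d\vol\le\frac{C}{R}\int_{B(p,2R)\setminus B(p,R)}\|X\|_g\,d\vol$. Because $\|X\|_g\in{\rm L}^1(M,g)$, the tail $\int_{M\setminus B(p,R)}\|X\|_g\,d\vol\to0$ as $R\to\infty$, so in fact $\frac{C}{R}\int_{B(p,2R)\setminus B(p,R)}\|X\|_g\,d\vol\le C\int_{M\setminus B(p,R)}\|X\|_g\,d\vol\to0$ (using $R\ge1$). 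Hence $\int_M\phi_R\,\Div X\,d\vol\to0$. On the other hand, $\phi_R\,\Div X\ge0$ increases pointwise to $\Div X\ge0$ as $R\to\infty$, so by the monotone convergence theorem $\int_M\phi_R\,\Div X\,d\vol\to\int_M\Div X\,d\vol$. Combining, $\int_M\Div X\,d\vol=0$ with a nonnegative integrand forces $\Div X\equiv0$ (a.e., hence everywhere by continuity).

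The only subtle point, and the step I would be most careful about, is the construction of the cutoff functions with the gradient bound $\|\nabla\phi_R\|_g\le C/R$ uniformly in $R$ on a possibly noncompact, incomplete-looking (but complete) manifold: one takes $\phi_R=\psi(d(p,\cdot)/R)$ for a fixed smooth $\psi\colon[0,\infty)\to[0,1]$ with $\psi\equiv1$ on $[0,1]$, $\psi\equiv0$ on $[2,\infty)$, and then $\nabla\phi_R=\frac1R\psi'(d(p,\cdot)/R)\nabla d(p,\cdot)$, so $\|\nabla\phi_R\|_g\le\frac1R\sup|\psi'|$ a.e., using that $\|\nabla d(p,\cdot)\|_g=1$ a.e. on a complete manifold; the field $\phi_R X$ is then Lipschitz with compact support, to which the divergence theorem applies. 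With this in hand the argument above goes through with $C=\sup|\psi'|$.
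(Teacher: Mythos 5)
Your argument is correct. Note first that the paper itself gives no proof of this lemma: it is quoted verbatim as Proposition~1 of Caminha--Souza--Camargo \cite{csc2010}, so there is nothing internal to compare against. Your cutoff/exhaustion argument is the standard self-contained proof and is sound: completeness (via Hopf--Rinow) guarantees that $\overline{B(p,2R)}$ is compact, so $\phi_R X$ is a compactly supported Lipschitz field to which the divergence theorem applies; the gradient bound $\|\nabla\phi_R\|_g\le C/R$ a.e.\ follows from $\|\nabla d(p,\cdot)\|_g=1$ a.e.; the boundary term is killed by the $L^1$ tail; and monotone convergence (with $\psi$ nonincreasing so that $\phi_R\nearrow 1$, or simply Fatou) gives $\int_M\Div X\,d\vol=0$, forcing $\Div X\equiv0$ by nonnegativity and continuity. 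For comparison, the cited source proves the statement by passing to the $(n-1)$-form $\omega=\iota_X\,d\vol$, observing $d\omega=(\Div X)\,d\vol$ and $|\omega|=\|X\|_g\in L^1$, and invoking Yau's version of Stokes' theorem for integrable exact forms on complete manifolds; that route hides exactly the exhaustion-by-cutoffs estimate you carry out explicitly, so the two proofs are morally identical, with yours having the advantage of being elementary and self-contained and the reference's having the advantage of brevity.
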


A contact metric manifold is said to be an $H$-\textit{contact manifold} if the Reeb vector field $\xi$ is harmonic. In \cite{perrone2004contact}, Perrone proved that \textit{``A contact metric manifold is an $H$-contact manifold,
if and only if $\xi$ is an eigenvector of the Ricci operator."}
On a contact metric manifold, the assumption that $\xi$ is an eigenvector of the Ricci operator implies
\begin{equation}\label{Eq-Q1}
  Q\,\xi=(\tr_gl)\,\xi.
\end{equation}
In this context, we ask the following question: \textit{``When a non-gradient
$(m,\rho)$-quasi-Einstein $H$-contact manifold is an Einstein manifold?"}.
If we consider an $H$-contact manifold admitting an $(m,\rho)$-quasi-Einstein structure and the non-zero potential vector field $V=\sigma\xi$ along $X\in \mathfrak{X}_M$, then, by Theorem~\ref{thm3.4}, \eqref{3.19A} and \eqref{3.20} are also valid. Thus, using
\eqref{Eq-Q1} in \eqref{3.20}, we acquire the following:
\begin{equation*}
 \tr_gl+\xi(\sigma)-\frac{\sigma^2}{m}=\lambda+\rho\,r.
\end{equation*}
Applying this in \eqref{3.20}, we obtain $\nabla\sigma=\xi(\sigma)\,\xi$.
Taking covariant derivative of this along $X\in \mathfrak{X}_M$ and using \eqref{2.3}, gives
\[
 \nabla_{X}\nabla\sigma = X(\xi(\sigma))\,\xi -\eta(X)(\varphi\,X+\varphi\,hX).
\]
From the above, it follows that
\begin{eqnarray}\label{3.28}
 X(\xi(\sigma))\eta(Y)-Y(\xi(\sigma))\eta(X)+2\,\xi(\sigma)g(X,\varphi\,Y)=0
\end{eqnarray}
for all $X$, $Y\in \mathfrak{X}_M$. Replacing $X$ and $Y$ by $\varphi\,X$ and $\varphi\,Y$, respectively, in \eqref{3.28}, we get $\xi(\sigma)=0$,
as $d\eta$ is non-vanishing on $M$, therefore, $\nabla\sigma=0$. Hence, $\sigma$ is constant. Thus, \eqref{3.20} becomes
\begin{eqnarray}\label{3.29}
 QY-\sigma\varphi\,hY - \frac{\sigma^2}{m} \eta(Y)\,\xi = (\lambda+\rho\,r) Y
\end{eqnarray}
for any $Y\in \mathfrak{X}_M$. Taking covariant derivative of this along $X\in \mathfrak{X}_M$, we obtain
\begin{equation*}
 (\nabla_XQ)Y-\sigma(\nabla_X\varphi\,h)Y + \frac{\sigma^2}{m} \{g(\varphi\,X+\varphi\,h X,Y)\,\xi +\eta(Y)(\varphi\,X+\varphi\,h X)\}= \rho\,X(r) Y
\end{equation*}
for any $Y\in \mathfrak{X}_M$. By virtue of the formula on a contact metric manifold (e.g., \cite{blair2010riemannian}):
\[
 \Div((h\,\varphi)Y) = g(QY,\xi)-2\,n\eta(Y),
\]
it follows from foregoing equation that
\begin{eqnarray}\label{3.31}
 \sigma(\tr_gl-2n)\,\eta(Y)=\frac{2\sigma-1}{2}\,g(Y,\nabla r)
\end{eqnarray}
for any $Y\in \mathfrak{X}_M$. Inserting $Y=\varphi\,Y$, we get
\[
 \frac{2\sigma-1}{2}\,g(\varphi\,Y,\nabla r)=0.
\]
This implies $g(\varphi\,Y,\nabla r)=0$ for $\sigma\ne\frac{1}{2}$. Recalling the previous idea, we conclude that the scalar curvature $r$ is constant. This immediately reduces \eqref{3.31} to
\[
 \sigma(\tr_g l -2n)\,\eta(Y)=0
\]
for any $Y\in \mathfrak{X}_M$. This implies $\tr_gl=2n$, as $\sigma$ is non-zero (follows from the condition that $V=\sigma\xi$ is non-zero).
This shows us that $M$ is a $K$-contact manifold, therefore, $V$ is a Killing field.
Next, using \eqref{3.29}, we conclude that $M$ is an $\eta$-Einstein manifold. So, we stated the following.

\begin{theorem}
Let an $H$-contact metric manifold $M^{2n+1}(\varphi,\xi,\eta,g)$ admit an $(m,\rho)$-quasi-Einstein structure with non-zero potential vector field $V=\sigma\,\xi$ for some smooth function $\sigma\ne\frac{1}{2}$. Then $V$ is a Killing vector field and $M$ is a $K$-contact $\eta$-Einstein manifold;
moreover, $\sigma$ is constant.
\end{theorem}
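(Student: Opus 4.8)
The plan is to follow the computation already laid out in the paragraph preceding the theorem, which essentially establishes the result under the $H$-contact hypothesis but needs to be organized into a clean proof. First I would start from the collinearity assumption $V=\sigma\,\xi$ and differentiate it along an arbitrary $X\in\mathfrak{X}_M$ using the contact formula \eqref{2.3}, obtaining \eqref{3.19A}; substituting this into the defining equation \eqref{1.2} yields \eqref{3.20}. Then I would use Perrone's characterization of $H$-contact manifolds (the Reeb vector field is harmonic iff $\xi$ is an eigenvector of the Ricci operator) together with \eqref{2.4b} to deduce \eqref{Eq-Q1}, namely $Q\xi=(\tr_g l)\,\xi$. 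Plugging $Y=\xi$ into \eqref{3.20} and using \eqref{Eq-Q1} gives the scalar identity $\tr_g l+\xi(\sigma)-\sigma^2/m=\lambda+\rho\,r$, and feeding this back into \eqref{3.20} collapses the $\xi$-direction terms so that $\nabla\sigma=\xi(\sigma)\,\xi$.

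Next I would differentiate $\nabla\sigma=\xi(\sigma)\,\xi$ along $X$, once again using \eqref{2.3}, to get an expression for $\nabla_X\nabla\sigma$; antisymmetrizing (i.e.\ using the symmetry of the Hessian $g(\nabla_X\nabla\sigma,Y)=g(\nabla_Y\nabla\sigma,X)$) produces \eqref{3.28}. Replacing $X,Y$ by $\varphi X,\varphi Y$ and invoking that $d\eta$ is nowhere vanishing on a contact metric manifold forces $\xi(\sigma)=0$, hence $\nabla\sigma=0$ and $\sigma$ is constant. With $\sigma$ constant, \eqref{3.20} simplifies to \eqref{3.29}. The heart of the argument is then to differentiate \eqref{3.29} along $X$, contract using the standard contact-metric identity $\Div((h\,\varphi)Y)=g(QY,\xi)-2n\,\eta(Y)$ and $\Div Q=\tfrac12\nabla r$, arriving at \eqref{3.31}: $\sigma(\tr_g l-2n)\,\eta(Y)=\tfrac{2\sigma-1}{2}\,g(Y,\nabla r)$.

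Finally I would exploit the hypothesis $\sigma\neq\tfrac12$: substituting $Y=\varphi Y$ in \eqref{3.31} kills the left-hand side (since $\eta\circ\varphi=0$) and leaves $\tfrac{2\sigma-1}{2}g(\varphi Y,\nabla r)=0$, so $g(\varphi Y,\nabla r)=0$ for all $Y$; combined with the analogue in the $\xi$-direction this shows $r$ is constant. Then \eqref{3.31} reduces to $\sigma(\tr_g l-2n)\,\eta(Y)=0$, and since $V=\sigma\xi$ is assumed non-zero, $\sigma$ does not vanish, whence $\tr_g l=2n$. By \eqref{2.4b} this means $\|h\|^2=0$, i.e.\ $h=0$, so $M$ is $K$-contact and $\xi$ (hence $V=\sigma\xi$) is Killing. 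Substituting $h=0$ into \eqref{3.29} gives $QY=(\lambda+\rho r)Y+\tfrac{\sigma^2}{m}\eta(Y)\,\xi$, which is exactly the $\eta$-Einstein condition. The main obstacle I anticipate is the bookkeeping in differentiating \eqref{3.29} and carrying out the contraction correctly: one must track the terms $(\nabla_X\varphi h)Y$ and the $\sigma^2/m$ contributions and then apply the divergence identity for $h\varphi$ precisely to extract the clean relation \eqref{3.31} — a sign error there would derail the rest. Everything else is a routine application of the contact-metric machinery recalled in Section~2 together with Perrone's theorem.
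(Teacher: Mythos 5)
Your proposal is correct and follows essentially the same route as the paper's own argument, which is given in the paragraph preceding the theorem: derive \eqref{3.20}, use Perrone's characterization to get $Q\xi=(\tr_g l)\,\xi$ and hence $\nabla\sigma=\xi(\sigma)\,\xi$, kill $\xi(\sigma)$ via the Hessian symmetry and the nondegeneracy of $d\eta$, then differentiate \eqref{3.29} and contract to reach \eqref{3.31}, from which $\sigma\ne\tfrac12$ and $\sigma\ne0$ force $r$ constant and $\tr_g l=2n$. The only point to keep an eye on is the passage from $g(\varphi Y,\nabla r)=0$ to $r$ constant (one still needs $\xi(r)=0$), which both you and the paper treat by the same brief appeal to the earlier argument.
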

In particular,
note that the class of $H$-contact manifold is sufficiently large, in fact, $K$-contact (in particular, Sasakian), $(k,\mu)$-contact and $\eta$-Einstein contact manifolds provide examples of $H$-contact spaces. Thus, \eqref{3.20} implies the following.

\begin{corollary}
Let a $K$-contact manifold $M^{2n+1}(\varphi,\xi,\eta,g)$ admit a $(m,\rho)$-quasi-Einstein structure, whose potential vector field $V$ is collinear to $\xi$. Then $V$ is a Killing vector field and $M$ is an $\eta$-Einstein manifold.
\end{corollary}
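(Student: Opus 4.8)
The plan is to run the direct $K$-contact computation, which is considerably simpler than the $H$-contact case treated just above: here $h=0$ and $\xi$ is already a Killing field by hypothesis, so no auxiliary restriction (such as $\sigma\ne\tfrac12$) is needed and the argument proceeds for every $\sigma$. Since $V=\sigma\,\xi$ and $\nabla_X\xi=-\varphi\,X$ on a $K$-contact manifold by \eqref{2.6}, differentiating $V=\sigma\,\xi$ gives $\nabla_X V=(X\sigma)\,\xi-\sigma\,\varphi\,X$, and substituting this into the defining equation \eqref{1.2} produces \eqref{K}. Equivalently, this is \eqref{3.20} with the term $\sigma\,\varphi\,h\,Y$ suppressed because $h=0$, which is why the corollary is flagged as a consequence of \eqref{3.20}.

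First I would extract from \eqref{K} the two contractions that locate $\nabla\sigma$. Putting $Y=\xi$ and using $Q\xi=2n\,\xi$ from \eqref{Eq-Q} yields \eqref{KK}; the further specialization $X=Y=\xi$ evaluates the bracket there as $\xi(\sigma)=\lambda+\rho\,r+\tfrac{\sigma^2}{m}-2n$. Reinserting this value back into \eqref{KK} cancels the bulk of the right-hand side and leaves the clean relation $\nabla\sigma=\xi(\sigma)\,\xi$.

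The one substantive step is to upgrade this to the statement that $\sigma$ is constant. I would covariantly differentiate $\nabla\sigma=\xi(\sigma)\,\xi$, using $\nabla_X\xi=-\varphi\,X$ once more, to obtain $g(\nabla_X\nabla\sigma,Y)=X(\xi(\sigma))\,\eta(Y)-\xi(\sigma)\,g(\varphi\,X,Y)$. Symmetry of the Hessian $\nabla^2\sigma$ kills the skew-symmetric part and leaves $\xi(\sigma)\,d\eta(X,Y)=0$ for all $X,Y\in\ker\eta$, where I use $d\eta(X,Y)=g(X,\varphi\,Y)$ from \eqref{2.1}. Since $d\eta$ is nondegenerate on the contact distribution $\mathcal{H}$ (indeed $d\eta(X,\varphi\,X)=-\|X\|^2\ne0$ for $0\ne X\in\mathcal{H}$), this forces $\xi(\sigma)=0$, whence $\nabla\sigma=0$ and $\sigma$ is constant. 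I expect this nondegeneracy argument to be the crux; it is the only place where the contact structure enters essentially, and everything else is bookkeeping with the $K$-contact identities.

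With $\sigma$ constant the first-order terms in \eqref{K} vanish and it reduces to $\Ric=(\lambda+\rho\,r)\,g+\tfrac{\sigma^2}{m}\,\eta\otimes\eta$. Evaluating this at $(\xi,\xi)$ and using $\Ric(\xi,\xi)=2n$ (which holds since $Q\xi=2n\,\xi$) gives $\lambda+\rho\,r=2n-\tfrac{\sigma^2}{m}$, so both coefficients are constant and $M$ is $\eta$-Einstein. For the remaining claim I would note that a constant multiple of a Killing field is Killing: since $\mathcal{L}_{\sigma\xi}\,g=\sigma\,\mathcal{L}_{\xi}\,g=0$ when $\sigma$ is constant and $\xi$ is Killing, the potential field $V=\sigma\,\xi$ is itself a Killing vector field. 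This establishes both conclusions of the corollary.
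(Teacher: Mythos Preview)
Your proof is correct and follows essentially the same route as the paper: the argument you give is exactly the motivating computation the authors present just before Theorem~\ref{thm3.4}, deriving \eqref{K}, then \eqref{KK}, then $\nabla\sigma=\xi(\sigma)\,\xi$, and finally using symmetry of the Hessian together with nondegeneracy of $d\eta$ to force $\sigma$ constant. You also make explicit the short step (left tacit in the paper) that $V=\sigma\,\xi$ is Killing once $\sigma$ is constant, since $\mathcal{L}_{\sigma\xi}g=\sigma\,\mathcal{L}_\xi g=0$.
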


\noindent
 \textbf{Acknowledgments}. The first author was financially supported from BIT Mesra (funded by TEQIP-III, MHRD, Govt. of India).


\baselineskip=13.5pt

\end{document}